\documentclass[a4paper,12pt]{article}
\usepackage{amsmath,amsthm,amssymb,latexsym,epsfig,graphicx}
\usepackage{enumerate}

\title{{\bf Beltrami equation with coefficient in  Sobolev and Besov spaces }}
\author{\Large{\Large Victor Cruz, Joan Mateu and  Joan Orobitg }}
\setlength{\textheight}{22 cm} \setlength{\textwidth}{15 cm}
\setlength{\oddsidemargin}{0cm} \setlength{\topmargin}{0cm}
\setlength{\headheight}{1cm} \setlength{\marginparwidth}{6.5cm}

\addtolength{\oddsidemargin}{.5cm}
\addtolength{\evensidemargin}{.5cm}

\newtheorem{teorema}{Theorem}
\newtheorem*{teor}{Theorem}
\newtheorem{prop}{Proposition}

\newtheorem{co}{Corollary}
\newtheorem{lemma}[co]{Lemma}

\theoremstyle{definition}

\newtheorem*{gracies}{Acknowledgements}

\newcommand{\Rn}{{\mathbb R}^n}
\newcommand{\ep}{\varepsilon}
\newcommand{\C}{\mathbb{C}}

\newcommand{\fr}{\partial\,\Omega}

\begin{document}

\date{}

\maketitle

\begin{abstract}

Our goal in this work is to present some function spaces on the complex plane $\C$, $X(\C)$, for which the quasiregular solutions of the Beltrami equation, $\overline\partial f (z) = \mu(z) \partial f (z)$, have first derivatives locally in $X(\C)$, provided that the Beltrami coefficient $\mu$ belongs to $X(\C)$.

\end{abstract}
\thanks{{\it Key words}: quasiregular mappings, Beltrami equation, Sobolev spaces.}

\section{Introduction}

A function $f : \C \longrightarrow \C$ is called $\mu$-quasiregular if it  belongs to the Sobolev space  $W^{1,2}_{\text{loc}}(\C)$ (functions with  distributional first order derivatives locally in $L^2$)  and satisfies the Beltrami equation
 \begin{equation}\label{eq1}
\overline\partial f (z) = \mu(z) \partial f (z), \quad a.e. \; z \in \C\,,
\end{equation}
where $\mu$, called the Beltrami coefficient of $f$, is a Lebesgue measurable function on the complex plane $\C$ satisfying $\|\mu\|_\infty < 1$\,.  
If, in addition, $f$ is a homeomorphism, then we say that f is $\mu$-quasiconformal.
Quasiconformal and quasiregular mappings are a central tool in modern geometric function theory and have had strong impact in other areas. 

It is well-known that quasiregular functions are locally in some H\"older class (Mori's Theorem), and moreover they actually belong to $W^{1,p}_{\text{loc}}$ for some $p>2$. 
In this paper we are interested in studying how the regularity of the Beltrami coefficient affects the regularity of the solutions of  \eqref{eq1}. Thus, if the Beltrami coefficient $\mu$ belongs to the 
H\"older class $C^{l,s}$, $0<s<1$, using Schauder estimates  (see for instance \cite[chapter 15]{AIM}),   then  $\mu$-quasiregular functions belong to $C^{l+1,s}_{\text{loc}}$. For the borderline cases $s=0$ and $s=1$, the $C^{l+1,s}$ regularity fails (e.g. \cite[p. 390]{AIM}). 
If $\mu\in W^{1,p}$, $2<p <\infty$, then one can read in Ahlfors' book  \cite[p. 56]{Ah} the result that quasiregular functions are locally in $W^{2,p}$.
The cases  $\mu\in W^{1,p}$, $p\le 2$, were studied in \cite{CFMOZ}; for instance, when $p=2$ one gets that the solutions are locally in  $ W^{2,q}$ for every $q< 2$.

Our goal in this work is to present some function spaces  $X$ for which all quasiregular solutions of \eqref{eq1} have first derivatives locally in $X$, provided that the Beltrami coefficient belongs to $X$.
These function spaces will enjoy the additional property of being an algebra (that is, the product of two functions in $X$ is again in $X$) and this feature will play an important role in our arguments. 
We deal with Triebel-Lizorkin spaces $F^{s}_{p,q}(\C)$ and Besov spaces  $B^{s}_{p,q}(\C)$ with $s>0$, $1<p<\infty$, $1<q<\infty$ and $sp>2$. 
Let $A^{s}_{p,q}(\C)$ denote any of these function spaces with the indices as we have determined.
In any case, the condition $sp>2$ ensures that we have  bounded continuous functions and multiplication algebras (e.g. \cite[4.6.4]{RS}).
In  Section \ref{preli} we will give the precise definitions of these function spaces  involved in the statement of the our first theorem.

\begin{teorema} \label{T1}
Suppose that $\mu \in A^{s}_{p,q}(\C)$ is compactly supported with $\| \mu \|_{\infty}= k<1$. Then any $f\in W^{1,2}_{\text{loc}}(\C)$ satisfying the Beltrami equation \eqref{eq1}
 has first derivatives locally in $A^{s}_{p,q}(\C)$.
\end{teorema}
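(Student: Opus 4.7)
The plan is to follow the classical analytical framework for the Beltrami equation and concentrate the new work in a single invertibility lemma adapted to the scale $A^{s}_{p,q}$. First, by Stoilow's factorization theorem, any $\mu$-quasiregular $f\in W^{1,2}_{\text{loc}}(\C)$ decomposes as $f=F\circ\Phi$, where $\Phi$ is the principal solution (the unique $\mu$-quasiconformal homeomorphism of $\C$ normalized by $\Phi(z)-z\to 0$ at infinity) and $F$ is entire. Composition with the holomorphic $F$ and with $\Phi$ preserves local membership in $A^{s}_{p,q}$, so it suffices to treat $\Phi$ itself. Writing $\Phi(z)=z+Ch(z)$ with $C$ the Cauchy transform, one has $\bar\partial\Phi=h$ and $\partial\Phi=1+Sh$ (with $S$ the Beurling transform), and the Beltrami equation becomes the fixed-point identity
\begin{equation*}
(I-\mu S)\,h=\mu.
\end{equation*}

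The theorem thus reduces to a Main Lemma: $I-\mu S$ is boundedly invertible on $A^{s}_{p,q}(\C)$. Three structural ingredients make this plausible. The hypothesis $sp>2$ gives the embedding $A^{s}_{p,q}\hookrightarrow L^\infty$ together with the algebra property, so multiplication by $\mu$ is bounded on $A^{s}_{p,q}$. The Beurling transform $S$, being a Calder\'on-Zygmund convolution operator, is also bounded on $A^{s}_{p,q}$. Finally, the classical Bojarski-Astala result provides invertibility of $I-\mu S$ on $L^{r}(\C)$ for $r$ in an interval around $2$, which secures uniqueness for compactly supported data. The obstacle is that a direct Neumann series on $A^{s}_{p,q}$ is not available: the operator norm $\|\mu S\|_{A^{s}_{p,q}\to A^{s}_{p,q}}$ is controlled by $\|\mu\|_{A^{s}_{p,q}}$ rather than by the Beltrami constant $k$, and may exceed $1$.

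To sidestep this, I would combine approximation with a Kato--Ponce type Leibniz estimate. Regularize $\mu$ by smooth, compactly supported $\mu_{n}\to\mu$ in $A^{s}_{p,q}$ with $\|\mu_{n}\|_\infty\le k$, and solve $(I-\mu_{n}S)h_{n}=\mu_{n}$ by Neumann series on a suitable $L^{r}$, obtaining smooth $h_{n}$ bounded in $L^{r}$ uniformly in $n$. Upgrading to a uniform $A^{s}_{p,q}$-bound is done by taking $A^{s}_{p,q}$-norms in $h_{n}=\mu_{n}+\mu_{n}Sh_{n}$ and applying a Leibniz inequality of the form
\begin{equation*}
\|\mu\,Sh\|_{A^{s}_{p,q}}\;\le\;C\bigl(\|\mu\|_\infty\|Sh\|_{A^{s}_{p,q}}+\|Sh\|_{L^\infty}\|\mu\|_{A^{s}_{p,q}}\bigr),
\end{equation*}
together with boundedness of $S$ on $A^{s}_{p,q}$. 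The first term carries the small constant $k$, while the second involves only a lower-order norm of $h$ which has already been controlled via the $L^{r}$-theory and the embedding into $L^\infty$. Passing to the limit then yields $h\in A^{s}_{p,q}$, and hence $\partial\Phi,\bar\partial\Phi\in A^{s}_{p,q}$ locally.

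The main technical obstacle is making this a priori estimate close in spite of the Calder\'on-Zygmund constant appearing in the Leibniz inequality: if the product $C\cdot k$ fails to be strictly less than $1$, the naive rearrangement collapses, and one must either refine the Leibniz inequality so that the constant in front of $\|\mu\|_\infty$ is sharp (equal to $1$), or decompose $\mu=\sum\mu_{j}$ into finitely many pieces of small $L^\infty$-norm and iterate a local version of the Neumann series. The algebra property $sp>2$ is used critically at both ends: it makes multiplication by $\mu$ bounded on $A^{s}_{p,q}$, and through $A^{s}_{p,q}\hookrightarrow L^\infty$ it legitimizes the appearance of $\|Sh\|_\infty$ in the paraproduct decomposition.
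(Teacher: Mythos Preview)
Your reduction is correct and matches the paper exactly: Stoilow factorization, the representation $\Phi=z+Ch$, and the scalar equation $(I-\mu S)h=\mu$ on $A^{s}_{p,q}$. The divergence begins at the invertibility step, and the obstacle you flag is real and, as stated, not overcome.

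Neither of your two proposed fixes closes the argument. A Kato--Ponce inequality on $A^{s}_{p,q}$ with sharp constant $1$ in front of $\|\mu\|_\infty\|Sh\|_{A^{s}_{p,q}}$ is not available; and even if it were, you would still pick up the operator norm of $S$ on $A^{s}_{p,q}$, so the effective coefficient is $k\cdot\|S\|_{A^{s}_{p,q}\to A^{s}_{p,q}}$, which need not be less than $1$. The decomposition $\mu=\sum\mu_j$ into pieces of small $L^\infty$-norm does not help either: cutting $\mu$ to lower $\|\mu_j\|_\infty$ typically inflates $\|\mu_j\|_{A^{s}_{p,q}}$, and in any case the Beltrami equation is not linear in $\mu$, so there is no factorization of $I-\mu S$ into operators $I-\mu_j S$. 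There is also a secondary gap: controlling $\|Sh_n\|_\infty$ ``via the $L^r$-theory and the embedding into $L^\infty$'' is circular, since $L^r$ does not embed in $L^\infty$; you would need an independent a priori H\"older bound on $h_n$ to feed into that term.

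The paper bypasses the constant problem entirely with Iwaniec's Fredholm argument. One writes
\[
(I-\mu S)\,P_{m-1}=I-(\mu S)^m=I-\mu^m S^m+K,
\]
where $P_{m-1}=\sum_{j=0}^{m-1}(\mu S)^j$ and $K=\mu^m S^m-(\mu S)^m$ is a finite sum of operators each containing the commutator $[\mu,S]$ as a factor. Two facts then finish the job. First, $\|\mu^m S^m\|_{A^{s}_{p,q}\to A^{s}_{p,q}}\to 0$: the iterate $S^m$ has Calder\'on--Zygmund constant $O(m^2)$, while $\|\mu^m\|_{A^{s}_{p,q}}\le C\|\mu\|_\infty^{m-1}\|\mu\|_{A^{s}_{p,q}}$, so the product decays like $m^2 k^{m-1}$. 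Second, $[\mu,S]$ is compact on $A^{s}_{p,q}$ (proved via the identity $[\mu,S]f=S(\bar\partial\mu\cdot Cf)-\partial\mu\cdot Cf$ and the compact embedding $A^{s+1}_{p,q}\hookrightarrow A^{s}_{p,q}$ on bounded sets), hence $K$ is compact. Thus $I-\mu S$ is Fredholm of index $0$ (homotopy $t\mapsto I-t\mu S$), and injectivity on $L^p$ from the classical theory gives invertibility. The point is that the smallness is extracted from $\|\mu\|_\infty^{m-1}$ for large $m$, not from a single application of a product estimate; this is exactly the mechanism your bootstrapping scheme lacks.
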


When the Beltrami coefficient is compactly supported there is a unique $W^{1,2}_{\text{loc}}(\C)$
solution of \eqref{eq1} normalized by the condition $z + O (1/z)$ near $\infty$. Moreover, it is a homeomorphism of the complex plane. It is called the principal solution of \eqref{eq1}. By
Stoilow's Factorization Theorem (e.g. \cite[section 5.5]{AIM}), for any quasiregular function $f$ there exists a holomorphic function $h$ such that $f= h\circ \phi$, where $\phi$ is the associated principal solution. Therefore, we will only concentrate  on principal solutions.
As is well known, $\phi$ is given explicitly by the formula \cite[p. 165]{AIM}
$$
\phi(z) = z + \mathsf C(h)(z)\,,
$$
where the operator
\begin{equation}\label{cauchy}
 \mathsf C \, h(z) = \frac{1}{\pi}\,  \int_{\mathbb C} h(z-w)\frac{1}{w}\, \mathrm{d}w
\end{equation}
 is the Cauchy transform of $h$. When $h\in L^p$, $1<p<\infty$, one has the identity $\bar\partial \mathsf C(h) =h$. Consequently, our theorem immediately follows from next proposition.

\begin{prop} \label{Prop1}
Suppose that $\mu$ is compactly supported with $\| \mu \|_{\infty} = k<1$ and $\phi(z) =  z + \mathsf C(h)(z)$ is the principal solution of the Beltrami equation \eqref{eq1}. Let $s > 0$, $1 < p <\infty$, $1 < q <\infty$ and $sp > 2$. If  $\mu \in A^{s}_{p,q}(\C)$, then  $h \in A^{s}_{p,q}(\C)$.
\end{prop}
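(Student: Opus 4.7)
The plan is to rewrite the Beltrami equation as a fixed-point equation for $h$ and to extract the $A^s_{p,q}$ regularity from it using the algebra structure of $A^s_{p,q}$ combined with the boundedness of the Beurling transform.

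First I would derive the equation for $h$. Applying $\partial$ to $\phi(z)=z+\mathsf{C}h(z)$ yields $\partial\phi = 1+\mathsf{B}h$, where $\mathsf{B}=\partial\mathsf{C}$ is the Beurling transform, and $\bar\partial\phi=h$. The Beltrami equation \eqref{eq1} then reads
$$h = \mu + \mu\,\mathsf{B}h, \qquad\text{i.e.,}\qquad (I-\mu\,\mathsf{B})\,h = \mu.$$
Because $\mu$ is compactly supported, so is $h$, and by the classical $L^p$-theory of Iwaniec this equation has a unique solution $h\in L^r(\C)$ for every $r$ in an open interval around $2$ whose size depends only on $k$.

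Next I would record three structural facts about $A^s_{p,q}(\C)$ that drive the regularity step. (i) The Beurling transform is bounded on $A^s_{p,q}(\C)$, since it is a translation-invariant Fourier multiplier with smooth symbol away from the origin. (ii) Because $sp>2$, $A^s_{p,q}(\C)$ is a multiplication algebra and satisfies a Kato--Ponce type inequality
$$\|fg\|_{A^s_{p,q}} \leq C\bigl(\|f\|_{\infty}\,\|g\|_{A^s_{p,q}} + \|f\|_{A^s_{p,q}}\,\|g\|_{\infty}\bigr).$$
(iii) The same hypothesis gives the embedding $A^s_{p,q}(\C)\hookrightarrow L^\infty(\C)$.

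To upgrade $h$ from $L^r$ to $A^s_{p,q}$, I would approximate $\mu$ by smooth compactly supported $\mu_n\to\mu$ in $A^s_{p,q}$ with $\|\mu_n\|_\infty\le k$, solve to get smooth $h_n$, and aim for a uniform a priori bound $\|h_n\|_{A^s_{p,q}}\le C$ depending only on $\|\mu_n\|_{A^s_{p,q}}$ and $k$. To derive the estimate I would apply a smoothing operator $D$ characterizing the $A^s_{p,q}$ norm (the fractional Laplacian $D^s$ for $F^{s}_{p,q}$, a first-order difference for $B^{s}_{p,q}$ with $0<s<1$, or a Littlewood--Paley projection) to $(I-\mu_n\mathsf{B})h_n=\mu_n$. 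Since $D$ commutes with the Fourier multiplier $\mathsf{B}$,
$$(I-\mu_n\mathsf{B})(Dh_n) = D\mu_n + [D,\mu_n]\,\mathsf{B}h_n,$$
and the commutator is handled by a Kato--Ponce commutator estimate, trading regularity of $\mu_n$ against lower-order norms of $h_n$ (already controlled by the $L^r$ theory). Inverting $I-\mu_n\mathsf{B}$ on an appropriate $L^p$ closes the bound, and a weak-compactness argument then yields $h\in A^s_{p,q}(\C)$.

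The main obstacle I expect is reconciling the inversion of $I-\mu\mathsf{B}$, which is guaranteed on $L^p$ only for $p$ in the Iwaniec range $(1+k,\,1+1/k)$, with the arbitrary admissible $p>2/s$ of the proposition. The natural remedies are a paraproduct/Bony decomposition of $\mu\,\mathsf{B}h$, isolating the high-frequency part so that it can be absorbed using $\|\mu\|_\infty=k<1$, or a bootstrap through intermediate Besov/Triebel--Lizorkin spaces starting from the $L^r$ regularity already in hand. A secondary technical point is to establish the sharp commutator estimate for $[D,\mu]$ uniformly across the Besov and Triebel--Lizorkin cases and for all admissible $s$; the integer-$s$ situation is treated in \cite{CFMOZ}, but fractional $s$ requires the full fractional Leibniz rule for these spaces.
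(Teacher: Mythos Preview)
Your derivation of $(I-\mu B)h=\mu$ and your identification of the structural ingredients (algebra property of $A^s_{p,q}$, boundedness of $B$, embedding into $L^\infty$) match the paper. But the argument has a genuine gap at exactly the point you flag: after applying $D$ and writing $(I-\mu_n B)(Dh_n)=D\mu_n+[D,\mu_n]\,Bh_n$, you still need to invert $I-\mu_n B$ on an $L^p$ space with $p$ possibly far outside the Iwaniec range $(1+k,\,1+1/k)$, and neither the paraproduct suggestion nor the bootstrap is actually carried out. These are not routine fixes; the entire difficulty of the proposition is that no smallness of $\mu$ in $A^s_{p,q}$ is assumed, only $\|\mu\|_\infty<1$, so a Neumann series or a direct contraction on $A^s_{p,q}$ is unavailable.

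The paper sidesteps this obstacle by inverting $I-\mu B$ \emph{directly on $A^s_{p,q}(\C)$} via Fredholm theory, following Iwaniec. One sets $P_{m}=\sum_{j=0}^{m}(\mu B)^j$ and observes that $(I-\mu B)P_{m-1}=I-\mu^m B^m+K$, where $K$ is a finite sum of operators each containing the commutator $[\mu,B]$ as a factor. Two lemmas then finish the job: (i) the operator norm of $\mu^m B^m$ on $A^s_{p,q}$ tends to zero, because $\|\mu^m\|_{A^s_{p,q}}\le C\|\mu\|_\infty^{m-1}\|\mu\|_{A^s_{p,q}}$ while the Calder\'on--Zygmund constant of the kernel of $B^m$ grows only polynomially in $m$; and (ii) $[\mu,B]$ is compact on $A^s_{p,q}$, shown by reducing to smooth $\mu$, using the identity $[\mu,B]f=B(\bar\partial\mu\cdot\mathsf{C}f)-\partial\mu\cdot\mathsf{C}f$, and invoking the compact embedding $A^{s+1}_{p,q}\hookrightarrow A^{s}_{p,q}$ on bounded sets. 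Thus $I-\mu B$ is Fredholm of index zero (via the homotopy $t\mapsto I-t\mu B$), and injectivity inherited from the $L^p$ theory gives invertibility. The key idea you are missing is that the smallness required for invertibility is manufactured by passing to a high power $\mu^m$ and exploiting $\|\mu\|_\infty<1$, rather than by restricting $p$ or by an a~priori estimate scheme.
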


\textsc{Sketch of the proof.} The Beurling transform is the principal value convolution operator

$$
Bf(z)= - \frac{1}{\pi}\,   \text{p.v.}  \int_{\mathbb C} f(z-w)\frac{1}{w^2}\, \mathrm{d}w\,.
$$
The Fourier multiplier of $B$ is $\frac{\overline{\xi}}{\xi}$\,, or,
in other words,
$$
\widehat{Bf}(\xi) = \frac{\overline{\xi}}{\xi}\,\, \hat{f}(\xi)\,.
$$
Thus $B$ is an isometry on $L^2(\C)$\, and is well-known that $B$, as any Calder\'{o}n-Zygmund convolution operator, is bounded on $ A^{s}_{p,q}(\C)$. 

Recall the relation between the Cauchy and the Beurling
transforms: $\partial \mathsf C = B$. Thus, $\partial\phi = 1+ B(h)$ and $\overline{\partial}\,{\phi} =h$, and consequently the function $h$ is determined by the equation
$$
(I-\mu\,B)(h) = \mu\, .
$$
So, we only need to invert the Beltrami operator $I-\mu\,B$ on the corresponding function space.
This task is completed in Section  \ref{sec3}. 

\qed
\medskip

For the critical case $sp = 2$, we consider a Riesz potential space $I_1(L^{2,1}(\mathbb C))$, 
the set of functions with first order derivatives in the Lorentz space $L^{2,1}(\mathbb C)$. Even 
though close to $L^2$ , the Lorentz space $L^{2,1}(\mathbb C)$ is strictly contained in $L^2$. This small  improvement on the derivatives allows us to have continuous functions vanishing at 
infinity (by the way, remind that functions with first order derivatives in $L^{2}$ may 
not be continuous).
\begin{prop} \label{Prop2}
Suppose that $\mu\in I_1(L^{2,1}(\mathbb C))$ is compactly supported with $\| \mu \|_{\infty}= k<1$ and $\phi(z) =  z + \mathsf C(h)(z)$ is the principal solution of the Beltrami equation \eqref{eq1}. Then  $h \in I_1(L^{2,1}(\mathbb C))$.
\end{prop}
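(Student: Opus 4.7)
The plan follows the template of Proposition~1: reduce the claim to showing that the Beltrami operator $I - \mu B$ is invertible on the Riesz potential space $Y := I_1(L^{2,1}(\mathbb{C}))$, so that from $(I - \mu B)h = \mu \in Y$ we conclude $h \in Y$. The prerequisite structural facts are: (i) $Y$ embeds into $C_0(\mathbb{C})$ (the critical Lorentz--Sobolev embedding) and is a Banach algebra under pointwise multiplication, since the Leibniz rule combined with the Lorentz--H\"older inequality $L^\infty \cdot L^{2,1} \subset L^{2,1}$ gives $\partial(fg), \bar\partial(fg) \in L^{2,1}$ for $f, g \in Y$; (ii) the Beurling transform $B$ is bounded on $L^{2,1}$ by real interpolation of its Calder\'on--Zygmund $L^p$ estimates and commutes with $\bar\partial$, so $B : Y \to Y$ is bounded.

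Applying $\bar\partial$ to the identity $h = \mu + \mu B h$ yields
\begin{equation*}
(I - \mu B)\,\bar\partial h \;=\; (\bar\partial\mu)(1 + Bh) \;=\; (\bar\partial\mu)\,\partial\phi,
\end{equation*}
while applying $\partial$ and using $\partial B = \bar\partial$ as Fourier multipliers gives $\partial h = (\partial\mu)\,\partial\phi + \mu\,\bar\partial h$. Since $\|\mu B\|_{L^2 \to L^2} \le k < 1$, $I - \mu B$ is invertible on $L^2(\mathbb{C})$; Astala's area distortion theorem extends invertibility to $L^p(\mathbb{C})$ for $p$ in an open interval around $2$, and real interpolation in the Lorentz scale extends it to $L^{2,1}(\mathbb{C})$. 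Thus the task reduces to showing that $(\bar\partial\mu)\,\partial\phi$ and $(\partial\mu)\,\partial\phi$ belong to $L^{2,1}(\mathbb{C})$: the first produces $\bar\partial h \in L^{2,1}$ by invertibility, and combined with the algebra property $\mu \,\bar\partial h \in L^\infty \cdot L^{2,1} \subset L^{2,1}$, the second yields $\partial h \in L^{2,1}$.

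The main obstacle lies precisely in this last step. By the Lorentz--H\"older inequality $L^\infty \cdot L^{2,1} \subset L^{2,1}$, it would suffice to prove $\partial\phi = 1 + Bh \in L^\infty(\mathbb{C})$. This $L^\infty$ bound on the derivative of the principal solution is not automatic from $\mu \in L^\infty$; I would extract it from the finer regularity $\mu \in Y$, which via a Stein--Weiss-type refinement endows $\mu$ with a Dini-type modulus of continuity. Under such regularity, Iwaniec--Martin-style Schauder estimates for the principal solution produce $\partial\phi \in L^\infty$. Balancing the endpoint Lorentz exponents and executing this passage from the borderline modulus of continuity to the $L^\infty$ bound on $\partial\phi$ is where the bulk of the technical work is expected to concentrate.
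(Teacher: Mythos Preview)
Your route differs from the paper's. The paper inverts $I-\mu B$ \emph{directly on} $Y=I_1(L^{2,1}(\mathbb C))$ via Fredholm theory: it shows (Lemma~1(b)) that $\|\mu^n B^n\|_{Y\to Y}\le C n^3 k^{n-1}\|\mu\|_Y\to 0$, and (Lemma~3) that the commutator $[\mu,B]$ is compact on $Y$; hence $I-\mu B$ is Fredholm of index $0$, and injectivity inherited from $L^p$ gives invertibility. No a priori bound on $\partial\phi$ is used.

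Your scheme instead differentiates and descends to $L^{2,1}$, which forces you to supply $\partial\phi\in L^\infty$ independently. The gap is precisely there. The claim that $\mu\in I_1(L^{2,1})$ yields a Dini modulus is not correct: from the Riesz representation one gets
\[
|\mu(x)-\mu(y)|\;\lesssim\;\omega(|x-y|),\qquad \omega(r)\;\simeq\;\int_0^{r^2}(\nabla\mu)^*(t)\,t^{-1/2}\,dt,
\]
and $\int_0^1 \omega(r)\,\frac{dr}{r}<\infty$ would require $\int_0^1 (\nabla\mu)^*(t)\,t^{-1/2}\log\frac{1}{t}\,dt<\infty$, which is strictly stronger than $\nabla\mu\in L^{2,1}$. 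So the critical Lorentz--Sobolev embedding gives continuity but \emph{not} Dini regularity, and the Schauder step does not produce $\partial\phi\in L^\infty$. Worse, $\partial\phi\in L^\infty$ is essentially equivalent to the conclusion: since $\partial\phi=1+Bh$ and $B$ preserves $Y\hookrightarrow C_0$, one has $h\in Y\Rightarrow \partial\phi\in L^\infty$, while conversely your own argument shows $\partial\phi\in L^\infty\Rightarrow h\in Y$. So the reduction is circular unless you find a genuinely independent proof of the Lipschitz bound on $\phi$, and the Dini route does not provide one.

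A minor point: the identity ``$\partial B=\bar\partial$'' is false (the multiplier of $\partial B$ is $\bar\xi^{\,2}/\xi$, not $\xi$). The correct relations are $B\bar\partial=\partial$ and $\partial B=B\partial$, which give $(I-\mu B)\,\partial h=(\partial\mu)\,\partial\phi$, symmetric to your $\bar\partial$-equation; this does not affect the strategy but should be fixed.
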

As we mentioned ago, Proposition \ref{Prop2}  does not hold when the Beltrami coefficient 
only has first derivatives in $L^{2}$ . However, the analogous result would remain valid 
if we replace  $I_1(L^{2,1}(\mathbb C))$ by $I_s(L^{\frac{2}{s},1}(\mathbb C))$, $0 < s < 2$.

The main result of \cite{MOV} identifies a class of non-smooth Beltrami coefficients which determine bilipschitz quasiconformal mappings. In particular, one proved the following result.

\begin{teor}[\cite{MOV}]
Let $\Omega$ be a bounded domain of $\C$ with boundary of class  $\mathcal{C}^{1,\varepsilon}$, $0< \varepsilon <1$, and let  $\mu\in \mathcal{C}^{0,\varepsilon}(\Omega)$ with $\| \mu\|_{\infty}<1$.
Let $\phi(z) =  z + \mathsf C(h)(z)$ be the principal solution of the Beltrami equation \eqref{eq1}.
Then  $h \in \mathcal{C}^{0,\varepsilon '}(\Omega)$ for any $\varepsilon' <\varepsilon$ and moreover $\phi$ is billipschitz .
\end{teor}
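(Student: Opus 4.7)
The strategy is to invert the Beltrami operator $I-\mu B$ on an appropriate Hölder space; once $h\in\mathcal{C}^{0,\varepsilon'}(\Omega)$ solves $(I-\mu B)h=\mu$, the derivatives $\partial\phi=1+B(h)$ and $\overline\partial\phi=h$ are continuous, and the bilipschitz property will follow from two-sided bounds on $|\partial\phi|$.

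The crucial analytic ingredient is a boundary-sensitive Hölder estimate for the Beurling transform. I would prove as the main lemma that, when $\partial\Omega\in\mathcal{C}^{1,\varepsilon}$, the operator $T_\Omega f:=\chi_\Omega\,B(\chi_\Omega f)$ is bounded on $\mathcal{C}^{0,\varepsilon'}(\overline\Omega)$ for every $\varepsilon'<\varepsilon$. The proof splits
$$
B(\chi_\Omega f)(z)=B\bigl(\chi_\Omega(f-f(z))\bigr)(z)+f(z)\,B\chi_\Omega(z);
$$
the first summand is controlled by the Hölder modulus of $f$ against the weak singularity of $1/w^2$, and the second reduces the problem to the Hölder regularity on $\overline\Omega$ of the single function $B\chi_\Omega$. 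That regularity is a Plemelj--Privalov-type computation: integration by parts turns the area integral defining $B\chi_\Omega$ into a Cauchy-type boundary integral along $\partial\Omega$, and the $\mathcal{C}^{1,\varepsilon}$ hypothesis on $\partial\Omega$ is precisely what makes this boundary integral Hölder of order $\varepsilon'<\varepsilon$, with the loss in the exponent intrinsic to the endpoint. I expect this step to be the main obstacle.

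With the lemma in hand, $\mu B$ is a bounded operator on $\mathcal{C}^{0,\varepsilon'}(\overline\Omega)$. Because the Hölder operator norm of $B$ need not lie below $1$, a plain Neumann series will not suffice; instead I would split $\mu=\mu_1+\mu_2$ with $\mu_1\in C^\infty_c(\Omega)$ approximating $\mu$ and $\|\mu_2\|_\infty$ chosen small enough that $I-\mu_2 B$ is invertible by geometric series, while $\mu_1 B$ is a compact perturbation on $\mathcal{C}^{0,\varepsilon'}$ thanks to the interior smoothness of $B(\chi_\Omega f)$ on $\operatorname{supp}(\mu_1)\Subset\Omega$ combined with Arzel\`a--Ascoli. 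Hence $I-\mu B$ is Fredholm of index zero, and uniqueness of the principal solution in the ambient $L^p$-class upgrades this to invertibility, producing $h\in\mathcal{C}^{0,\varepsilon'}(\Omega)$.

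For the bilipschitz assertion, $\partial\phi=1+B(h)$ is continuous on $\overline\Omega$; outside $\Omega$ the coefficient $\mu$ vanishes, $\phi$ is conformal, and $\partial\phi\to 1$ at $\infty$, so $\partial\phi$ is continuous on all of $\C$. Quasiconformality gives the Jacobian bound $|\partial\phi|^2-|\overline\partial\phi|^2\ge(1-k^2)|\partial\phi|^2$, and together with the injectivity of $\phi$ this prevents $\partial\phi$ from vanishing; compactness then yields uniform lower and upper bounds on $|\partial\phi|$. The analogous bounds for $D\phi^{-1}=(D\phi)^{-1}\circ\phi^{-1}$ follow at once, whence $\phi$ is bilipschitz.
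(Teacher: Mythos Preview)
This theorem is not proved in the present paper; it is quoted from \cite{MOV} as background for Theorem~\ref{T2}. So there is no proof here to compare against directly. That said, the paper's own arguments for Propositions~\ref{Prop1}--\ref{Prop2} and Theorem~\ref{T2} follow the template of \cite{MOV}, so one can compare your outline to that template.

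Your overall plan (boundedness of $B_\Omega$ on the relevant space, Fredholm argument for $I-\mu B$, injectivity from $L^p$ theory, then bilipschitz from continuity of $\partial\phi$) matches the structure of \cite{MOV} and of this paper. The genuine gap is in your Fredholm step. You propose splitting $\mu=\mu_1+\mu_2$ with $\mu_1\in C^\infty_c(\Omega)$ and then claim that $\mu_1 B$ is compact on $\mathcal{C}^{0,\varepsilon'}$ ``thanks to the interior smoothness of $B(\chi_\Omega f)$ on $\operatorname{supp}(\mu_1)$''. This is false: the Beurling transform does not gain regularity, so $B(\chi_\Omega f)$ restricted to a compact subset of $\Omega$ is no smoother than $f$ itself, and multiplication by a smooth cutoff does not produce compactness. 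The operator $\mu_1 B$ is simply not compact on H\"older (or Besov, or Sobolev) spaces. A secondary issue is that smallness of $\|\mu_2\|_\infty$ alone does not make $\|\mu_2 B\|$ small on $\mathcal{C}^{0,\varepsilon'}$, since the product estimate there involves $\|\mu_2\|_{\mathcal{C}^{0,\varepsilon'}}$, which you have no control over after subtracting a smooth approximant.

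The device actually used in \cite{MOV}, and reproduced in this paper in Section~\ref{sec3} and Section~5, is Iwaniec's trick: set $P_{m-1}=I+\mu B+\cdots+(\mu B)^{m-1}$ and write
\[
(I-\mu B)P_{m-1}=I-(\mu B)^m=I-\mu^m B^m + K,
\]
where $K=\mu^m B^m-(\mu B)^m$ is a finite sum of operators each containing the \emph{commutator} $[\mu,B]$ as a factor. It is $[\mu,B]$, not $\mu B$, that is compact; this is the key distinction your argument misses. Then $\mu^m B^m$ has small operator norm for large $m$ because $\|\mu^m\|\lesssim m\|\mu\|_\infty^{m-1}\|\mu\|$ while the Calder\'on--Zygmund constant of $B^m$ grows only polynomially in $m$. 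This yields Fredholm index zero, and injectivity from the $L^p$ theory finishes the invertibility. Your bilipschitz sketch is essentially correct, with the caveat that $\partial\phi$ need not be continuous across $\partial\Omega$ (since $\mu$ itself jumps there); what one actually gets is boundedness of $\partial\phi$ on $\overline\Omega$ and on $\mathbb{C}\setminus\Omega$ separately, which suffices.
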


Now, we replace the H\"older smoothness of the Beltrami coefficient by a Sobolev (or Besov) condition restricted on a domain. (See definitions in the next section). 

\begin{teorema}\label{T2}
Let $0<s< \varepsilon<1$ and $1<p<\infty$ such that $sp>2$ and let $\Omega$ be a bounded domain of $\C$ with boundary of class  $\mathcal{C}^{1,\varepsilon}$.
Suppose that $\mu$ is  supported in $\overline\Omega$ with $\| \mu \|_{\infty}= k<1$ and $\phi(z) =  z + \mathsf C(h)(z)$ is the principal solution of the Beltrami equation \eqref{eq1}. \begin{enumerate}
\item If $\mu \in W^{s,p}(\Omega)$, then $h \in W^{s,p}(\Omega)$.
\item If  $\mu \in B^{s}_{p,p}(\Omega)$, then  $h \in B^{s}_{p,p}(\Omega)$.
\end{enumerate}
\end{teorema}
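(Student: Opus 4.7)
My plan is to mimic Proposition~\ref{Prop1}, but localized to $\Omega$: reduce the problem to inverting a truncated Beurling-type operator on $W^{s,p}(\Omega)$ or $B^{s}_{p,p}(\Omega)$. I would first observe that, since $\operatorname{supp}\mu\subset\overline\Omega$, the identity $h=\mu(1+Bh)$ from the sketch of Proposition~\ref{Prop1} forces $\operatorname{supp} h\subset\overline\Omega$ as well. Writing $T_\Omega g:=\chi_\Omega\, B(\chi_\Omega g)$, the restriction of $h=\mu+\mu Bh$ to $\Omega$ reads
$$
h|_\Omega=\mu|_\Omega+\mu|_\Omega\cdot T_\Omega(h|_\Omega),
$$
so the whole problem reduces to inverting $I-\mu\,T_\Omega$ on the appropriate function space over $\Omega$.

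The central ingredient, and the principal obstacle, is the boundedness of $T_\Omega$ on $W^{s,p}(\Omega)$ and on $B^{s}_{p,p}(\Omega)$ when $\partial\Omega\in C^{1,\varepsilon}$ and $0<s<\varepsilon$. This is the fractional analog of the key estimate of \cite{MOV} for H\"older classes, and it is precisely where the $C^{1,\varepsilon}$ hypothesis enters. I would split the action of $T_\Omega$ into an interior piece, where the $W^{s,p}(\C)$-boundedness of the Beurling transform (a Calder\'on-Zygmund operator) gives the estimate, and a boundary layer piece parametrized by $C^{1,\varepsilon}$ graphs near $\partial\Omega$. For the boundary contribution I would exploit the fractional-difference characterization of $W^{s,p}$ and $B^{s}_{p,p}$, using the condition $s<\varepsilon$ to absorb the loss of smoothness produced by $\chi_\Omega$ across $\partial\Omega$. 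This is the step I expect to consume most of the technical effort.

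Granted this boundedness, the theorem would follow by an inversion/bootstrap argument. Since $sp>2$, both $W^{s,p}(\Omega)$ and $B^{s}_{p,p}(\Omega)$ are multiplication algebras containing $\mu$, so $\mu\,T_\Omega$ is bounded there. A direct Neumann series is not available because the operator norm is not controlled by $\|\mu\|_\infty$; but the global $L^{r}$ theory for $I-\mu B$ already provides $h\in L^{r}(\C)$ for some $r>2$, hence $h|_\Omega\in L^{r}(\Omega)$. Iterating the identity $h|_\Omega=\mu|_\Omega+\mu|_\Omega\cdot T_\Omega(h|_\Omega)$ and repeatedly invoking the key boundedness together with the algebra property, one lifts the regularity of $h|_\Omega$ through an intermediate scale of spaces in finitely many steps, arriving at $h|_\Omega\in W^{s,p}(\Omega)$. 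The Besov statement (2) is handled by the identical argument, since $B^{s}_{p,p}(\Omega)$ enjoys the same algebra property for $sp>2$ and the proof of the key boundedness is based on fractional differences which characterize both scales.
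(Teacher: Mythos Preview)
Your reduction of the problem to inverting $I-\mu\,T_\Omega$ on $X(\Omega)=W^{s,p}(\Omega)$ or $B^s_{p,p}(\Omega)$ is correct and is exactly what the paper does. For the boundedness of $T_\Omega$ on $X(\Omega)$ the paper does not split into interior and boundary layers; it proves a general $T(1)$-type criterion (Theorem~\ref{T3}): for even smooth homogeneous Calder\'on--Zygmund operators, $T_\Omega$ is bounded on $X(\Omega)$ if and only if $T\chi_\Omega\in X(\Omega)$. The proof goes through Meyer's decomposition of $T_\Omega f(y)-T_\Omega f(x)$ into four terms $g_1,\dots,g_4$ plus $f(x)(T\chi_\Omega(y)-T\chi_\Omega(x))$, each estimated in the difference characterization of $X(\Omega)$. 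The hypothesis $\partial\Omega\in\mathcal C^{1,\varepsilon}$ with $s<\varepsilon$ is used only to check, via the Main Lemma of \cite{MOV}, that $B\chi_\Omega\in\mathcal C^{0,\varepsilon}(\Omega)\subset X(\Omega)$, and to control $g_4$ through a uniform bound $\|T_\Omega\varphi_B\|_\infty\le C(\Omega)$ for bump functions $\varphi_B$. Your interior/boundary-layer scheme might be workable, but it is a different organization and you would still need to isolate the contribution of $B\chi_\Omega$.

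The genuine gap is your inversion step. The bootstrap you describe cannot get off the ground: the Beurling transform is order zero, and pointwise multiplication by $\mu\in X(\Omega)\subset L^\infty$ is not smoothing either, so starting from $h|_\Omega\in L^r(\Omega)$ the identity $h=\mu+\mu\,T_\Omega h$ only returns $h\in L^r(\Omega)$ again; there is no mechanism to climb the scale. The paper avoids this by proving directly that $I-\mu B_\Omega$ is Fredholm of index zero on $X(\Omega)$, along the lines of Iwaniec~\cite{I1}. One writes
\[
(I-\mu B_\Omega)P_{m-1}=I-\mu^m(B_\Omega)^m+R,\qquad P_{m-1}=\sum_{j=0}^{m-1}(\mu B_\Omega)^j,
\]
where $R$ is a finite sum of operators containing the commutator $[\mu,B_\Omega]$, which is shown to be compact on $X(\Omega)$. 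Then one further splits $\mu^m(B_\Omega)^m=\mu^m(B^m)_\Omega-\mu^m K_m$ with $K_m=(B^m)_\Omega-(B_\Omega)^m$; the operators $K_m$ are also compact. Finally, $\|\mu^m(B^m)_\Omega\|_{X(\Omega)\to X(\Omega)}\le C\,m^3\|\mu\|_\infty^{m-1}\|\mu\|_{X(\Omega)}\to 0$, so $I-\mu^m(B^m)_\Omega$ is invertible for $m$ large. Hence $I-\mu B_\Omega$ is Fredholm of index zero, and injectivity (inherited from $L^p$ via \cite{I1}) gives invertibility. The compactness of $[\mu,B_\Omega]$ and of $K_m$ is obtained from pointwise H\"older-type estimates taken from \cite{MOV} together with the compact embeddings $W^{\beta,p}(\Omega)\hookrightarrow W^{s,p}(\Omega)$ and $B^{\beta}_{p,p}(\Omega)\hookrightarrow B^{s}_{p,p}(\Omega)$ for $s<\beta<\varepsilon$; this Fredholm machinery is the substantive piece your outline is missing.
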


The proof runs in parallel to that of the above propositions, but now a new obstacle appears:
the boundedness of the Beurling transform on $W^{s,p}(\Omega)$ (or $B^{s}_{p,p}(\Omega)$).
In general, it is not clear if Calder\'{o}n-Zygmund convolution operators are bounded on $W^{s,p}(\Omega)$
(or $B^{s}_{p,p}(\Omega)$). Of course, the answer depends on the operator and on the boundary of the domain. We  will study this question in domains $\Omega $ of $\mathbb R^{n}$, $n\ge 2$.

In $\mathbb R^n$ we consider the kernel $K(x)= \frac{\omega(x)}{|x|^n}$, $x\ne 0$,  where $\omega$ is a homogeneous function of degree $0$, with zero integral on the unit sphere and $\omega \in \mathcal{C}^{1}(S^{n-1})$. Then, the singular integral
$$
T f(x)=   \text{p.v.}  \int f(y) K(x-y)\, \mathrm{d}y\,
$$
is bounded on $L^p(\mathbb R^n)$, $1<p<\infty$. (Really, the condition  $\omega \in \mathcal{C}^{1}(S^{n-1})$ could be weakened
but it is enough for our purpose). On the other hand, Sobolev spaces $W^{s,p}(\mathbb R^n)$ ($=F^{s}_{p,2}(\mathbb R^n)$) are described as spaces of Bessel potentials, that is, $f\in W^{s,p}$ if and only if $f=G_{s} * g$, where $G_{s}$ denotes the Bessel kernel of order $s$ and $g\in L^p$ (e.g. \cite[chapter 5]{St}).   Remember that the Bessel kernel of order $s$, $G_{s}$ , is the $L^1$ function with 
Fourier transform $(1+ |\xi|^2)^{-\frac{s}{2}}$. Then, because $T$ is a convolution operator, one has the identity
$$
T(f) = T(G_{s}*g) = G_{s}*(Tg)
$$
and one gets the boundedness of $T$ on $W^{s,p}$, $1<p< \infty$. But if one takes $f\in W^{s,p}(\Omega)$, $\Omega$ a domain of $\mathbb R^n$, then
$$
T_{\Omega} f(x) :=   \text{p.v.}  \int_{\Omega} f(y) K(x-y)\, dy\,
$$
clearly belongs to $L^p(\Omega)$. However, perhaps $T_{\Omega}f\notin W^{s,p}(\Omega)$. For instance, let $Q$ denote a rectangle in $\C$ and $\chi_Q$ denote its characteristic function.  A computation shows that the Beurling transform of $\chi_{Q}$, $B \chi_{Q}$, has logarithmic singularities at the vertices of the rectangle and, therefore, its first derivatives belong to $L^p(Q)$ only if $p<2$ (e.g. \cite[p. 147]{AIM}). For positive results, we restrict our attention to operators with even kernel, that is, $K(-x) = K(x)$. In Section \ref{CZdo} we will deal with Theorem \ref{T3}.

\begin{teorema}\label{T3}
Let $\Omega$ be a bounded domain of $\mathbb R^n$ with boundary of class  $\mathcal{C}^{1,\beta}$, $\beta >0$, and let $T$ be an even smooth homogeneous Calder\'{o}n-Zygmund operator.
\begin{enumerate}
\item If $T\chi_{\Omega} \in B^{s}_{p,p}(\Omega)$, $0<s<1$, $n<sp<\infty$, then $T_{\Omega}: B^{s}_{p,p}(\Omega) \longrightarrow B^{s}_{p,p}(\Omega)$.
\item If $T\chi_{\Omega} \in W^{s,p}(\Omega)$,  $0<s<1$, $n<sp<\infty$, then $T_{\Omega}: W^{s,p}(\Omega) \longrightarrow W^{s,p}(\Omega)$.
\item If $T\chi_{\Omega} \in W^{1,p}(\Omega)$, $n<p <\infty$, then $T_{\Omega}: W^{1,p}(\Omega) \longrightarrow W^{1,p}(\Omega)$.
\end{enumerate}

In any case the norm operator depends on the domain $\Omega$ and the Calder\'{o}n-Zygmund constant of the kernel of $T$ (see \eqref{CZ} for the definition). 
\end{teorema}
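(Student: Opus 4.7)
The plan is to exploit a \emph{T(1)-type decomposition}. For any $x \in \Omega$, the homogeneous, mean-zero kernel $K$ satisfies $\mathrm{p.v.}\!\int_{\mathbb R^n} K(y)\,dy = 0$, which allows one to write
$$
T_\Omega f(x) \;=\; f(x)\,T\chi_\Omega(x) \;+\; \int_\Omega \bigl[f(y) - f(x)\bigr]\,K(x-y)\,dy \;=:\; f(x)\,T\chi_\Omega(x) + R f(x).
$$
The assumption $sp > n$ (respectively $p > n$ in case (3)) embeds the target space into some Hölder class $C^{\alpha}(\overline{\Omega})$, so the kernel $(f(y)-f(x))K(x-y)$ has integrable singularity $\lesssim |x-y|^{-n+\alpha}$ and the decomposition is pointwise meaningful. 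The same assumption makes $B^{s}_{p,p}(\Omega)$, $W^{s,p}(\Omega)$ and $W^{1,p}(\Omega)$ multiplication algebras, so the first summand is handled immediately by
$$
\|f\cdot T\chi_\Omega\|_{B^{s}_{p,p}(\Omega)} \;\le\; C\,\|f\|_{B^{s}_{p,p}(\Omega)}\,\|T\chi_\Omega\|_{B^{s}_{p,p}(\Omega)},
$$
and analogously for the Sobolev norms. The hypothesis on $T\chi_\Omega$ is used only at this single step.

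The core of the argument is to prove that $R$ is bounded on the target space \emph{with no hypothesis on $T\chi_\Omega$}. I would use the intrinsic Gagliardo seminorm
$$
[g]_{B^{s}_{p,p}(\Omega)}^{p} \;\approx\; \iint_{\Omega\times\Omega}\frac{|g(x)-g(x')|^{p}}{|x-x'|^{n+sp}}\,dx\,dx'
$$
and split $Rf(x) - Rf(x')$ into a near region $\{|y-x|\lesssim |x-x'|\}$, where the Hölder regularity of $f$ absorbs the kernel singularity, and a far region $\{|y-x|\gtrsim |x-x'|\}$, where the mean-value bound $|K(x-y)-K(x'-y)|\lesssim |x-x'|/|x-y|^{n+1}$ supplies the decay needed to integrate against the $|x-x'|^{-n-sp}$ weight. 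The \emph{evenness} of $K$ is decisive: by symmetrizing the far-region integrand around $x$, pairing $y$ with its reflection, one gains an extra power of $|x-x'|$ which defeats the logarithmic singularities that appear for generic kernels (recall the rectangle example with $B\chi_Q$ in the introduction). The $C^{1,\beta}$ regularity of $\partial\Omega$ enters through the control of thin boundary strips, which can be locally flattened and analyzed as small $C^{1,\beta}$ perturbations of half-space computations.

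For statement (3), a more direct route is available: for $x \in \Omega$ and smooth $f$, integration by parts gives
$$
\partial_{x_i}(T_\Omega f)(x) \;=\; T_\Omega(\partial_{y_i} f)(x) \;-\; \int_{\partial\Omega} f(y)\,K(x-y)\,\nu_i(y)\,dS(y),
$$
reducing matters to the $L^{p}$-boundedness of $T$ on the first term and to the analysis of a single-layer potential for the second, which is controlled by $T\chi_\Omega \in W^{1,p}(\Omega)$ together with the $W^{1,p}$-algebra property for $p > n$. The principal obstacle throughout is quantifying the interaction between the evenness of $K$ and the $C^{1,\beta}$ geometry of $\partial\Omega$ in the fractional setting; this is precisely where the logarithmic singularities of generic Calderón-Zygmund operators would destroy boundedness, and overcoming them by combining the two cancellations constitutes the essential technical work.
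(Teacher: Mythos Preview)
Your overall plan---separate the $T\chi_\Omega$ contribution and then control the remainder via a near/far splitting of first differences---is exactly the paper's strategy (Lemma~\ref{lema:diferenciasT}, following Meyer). The gap is in your account of where evenness enters and what it does. When you expand $Rf(x)-Rf(x')$ on the far region you obtain two pieces:
\[
\int_{F}\bigl(f(y)-f(x)\bigr)\bigl[K(x-y)-K(x'-y)\bigr]\,dy
\quad\text{and}\quad
\bigl(f(x')-f(x)\bigr)\int_{F}K(x'-y)\,dy.
\]
The first piece is controlled by the mean-value bound on $K$ alone (this is the paper's $g_1$); no evenness is needed, and there is no symmetrization argument that gains an extra power of $|x-x'|$ here. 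The second piece is the genuine obstruction (the analogue of the paper's $g_4$), and what one needs is \emph{not} a gain but a \emph{uniform} bound
\[
\Bigl|\int_{\Omega}K(x'-y)\,\psi\!\Bigl(\tfrac{y-x}{|x-x'|}\Bigr)\,dy\Bigr|\le C,
\]
i.e.\ $\|T_\Omega\varphi_B\|_\infty\le C$ for bump functions $\varphi_B$ at every scale. This is Lemma~\ref{lema:acotacion}, which rests on the Main Lemma of \cite{MOV}. The mechanism is that evenness gives $T(\chi_B)\chi_B=0$ for balls, and for $\mathcal C^{1,\beta}$ domains one compares $\Omega\cap B$ locally to a half-ball; the relevant symmetry is with respect to the tangent hyperplane of $\partial\Omega$, not the point reflection $y\mapsto 2x-y$ you describe (that reflection sends $K(x-y)$ to itself and produces no cancellation).

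For item~(3) the paper does not integrate by parts; it runs the same first-difference argument through the characterization \eqref{sobolev1}. Your proposed reduction to a single-layer potential is not obviously closed by the algebra property: the boundary integral $\int_{\partial\Omega}f(y)K(x-y)\nu_i(y)\,dS(y)$ is not $f(x)\,\partial_i T\chi_\Omega(x)$, so knowing $T\chi_\Omega\in W^{1,p}(\Omega)$ and that $W^{1,p}$ is an algebra does not directly control it.
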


The result reduces the study of the boundedness of the operator $T_{\Omega}$ to the behaviour of $T_{\Omega}$ on the function $\chi_{\Omega}$. Thus, we have a necessary and sufficient condition of type $T(1)$.  In the proof of Theorem 3, we follow the same method of Y.\ Meyer in  \cite{Me}, where he studied the continuity of generalised Calder\'{o}n-Zygmund operators on Sobolev spaces $W^{s,p}(\mathbb R^n)$.

Since T is bounded on $L^p$, using complex and real interpolation, one could think that items 1 and 2 of the above theorem are a consequence of the third one. But this it not the case because the conditions on items 1 and 2 are weaker than $T\chi_{\Omega} \in W^{1,p}(\Omega)$. 
When $\Omega$ is a bounded domain of $\mathbb R^n$ with boundary of class $ \mathcal{C}^{1,\varepsilon}$, $0<s<\varepsilon <1$, and  $n<sp<\infty$ then $T_{\Omega}$ is bounded on $W^{s,p}(\Omega)$ and $B^{s}_{p,p}(\Omega)$ (see details in Section \ref{CZdo}). In particular, the assumptions on the domain $\Omega$, in the statement of Theorem \ref{T2},  are to ensure that the Beurling transform is bounded on the corresponding function space. Recently, V. Cruz and X. Tolsa( \cite{CT}, \cite{To}) have showed that if the outward unit normal $N$ on $\partial\Omega$ belongs to the Besov space $B^{s -1/p}_{p,p}(\partial\Omega)$, then $B {\chi_{\Omega}}\in W^{s,p}(\Omega)$.\\

In Section 2 we shall introduce some basic notation and set up some necessary 
preliminaries. The proof of Proposition \ref{Prop1} and  Proposition \ref{Prop2}  are in  Section \ref{sec3}. In Section \ref{CZdo} we study 
even smooth homogeneous Calder\'{o}n-Zygmund operators on domains. The proof of the Theorem \ref{T2} is explained in Section 5. 

As usual, the letter $C$ will denote a constant, which may be different at each occurrence and which is independent of the relevant variables under consideration.

\section{Preliminaries}\label{preli}
We start reviewing some basic facts concerning Triebel-Lizorkin spaces and Besov 
spaces. Let $\mathcal S(\mathbb R^n)$  be the usual Schwartz class of rapidly decreasing 
$\mathcal C^{\infty}$-functions  and 
$\widehat g$ stands for the Fourier transform of $g$. Let $\psi\in \mathcal S(\mathbb R^n)$ with 
$\widehat\psi(\xi) =1$ if $|\xi| \le 1$ and $\widehat\psi(\xi) =0$ if $|\xi | \ge 3/2$. We set $\psi_{0} = \psi$ and $\widehat\psi_{j}(\xi) = \widehat\psi(2^{-j}\xi) -  \widehat\psi(2^{-j+1}\xi)$, $j\in\mathbb N$.
Since $\sum_{j=0}^{\infty}\widehat\psi_{j}(\xi)=1$ for all $\xi\in \mathbb R^n$, the $\widehat\psi_{j}$
form a dyadic resolution of unity. Then, for $f\in L^1_{\operatorname{loc}}(\mathbb R^n)$, $1\le p,q <\infty$, and $s>0$, one defines the norms
$$
\| f\|_{B^s_{p,q}} = \left(  \sum_{j=0}^{\infty} \| 2^{js}\psi_{j}* f \|_{p}^q    \right)^{\frac{1}{q}}
$$
and
$$
\| f\|_{F^s_{p,q}} = \left\| \left(  \sum_{j=0}^{\infty} | 2^{js}\psi_{j}* f |^q     \right)^{\frac{1}{q}}
\right\|_{p}
$$
The Besov space  $B^s_{p,q}(\mathbb R^n)$ consists of the functions such that $\| f\|_{B^s_{p,q}} <\infty$, while the functions in the Triebel-Lizorkin space $F^s_{p,q}(\mathbb R^n)$ are those such that $\| f\|_{F^s_{p,q}} <\infty$.

The spaces $F^s_{p,2}$  , $1 < p <\infty$, are known as Sobolev spaces of fractional order or 
Bessel-potential spaces and we prefer denote them by $W^{s,p}$. Since $p\ge 1$ and $q \ge 1$, 
both $B^s_{p,q}$ and $F^s_{p,q}$ are Banach spaces. A systematic treatment of these spaces may  be found in \cite{Tri1}, \cite{RS} and \cite[Chapter 6]{Gr}. A remarkable fact when $sp > n$ is that $B^s_{p,q}$ and $F^s_{p,q}$ form an algebra with respect to pointwise multiplication, that is, 
\begin{equation}\label{algebra1}
\| f \cdot g\|_{A^s_{p,q}} \le C \| f\|_{A^s_{p,q}}  \| g\|_{A^s_{p,q}},
\end{equation}
where $A^s_{p,q}$ denotes the corresponding Besov space or Triebel-Lizorkin space (e.g. \cite[4.6.4]{RS}). Moreover, functions in these spaces satisfy some H\"older condition and so they are continuous functions with
\begin{equation*}\label{algebra2}
\| f\|_{\infty}\le C \| f\|_{A^s_{p,q}}\, .
\end{equation*}

We say that a bounded domain
$\Omega \subset \Rn$ has a boundary of class $\mathcal{C}^{1,\varepsilon}$ if
$\fr$ is a $C^1$ hyper-surface whose unit normal vector satisfies
a Lipschitz (H\"older) condition of order $\ep$ as a function on the
surface. To state an alternative condition, for $x = (x_1, \dots ,x_n)\in \Rn $ we use the notation $x = (x',x_n)$\,, where $x'= (x_1,\dots,x_{n-1})$\,. Then $\Omega$ has a boundary of class
$\mathcal{C}^{1,\varepsilon}$ if for each point $a \in \partial\,\Omega$ one may find a ball $B(a,r)$ and a function $x_n = \varphi(x')$, of class $\mathcal{C}^{1,\varepsilon}$, such that, after a rotation if necessary, $\Omega \cap B(a,r)$ is the part of $B(a,r)$ lying below the graph of $\varphi$\,. Thus we get
\begin{equation}\label{eq6}
\Omega \cap B(a,r)= \{x \in B(a,r) : x_n <
\varphi(x_1,\dots,x_{n-1})\}\,.
\end{equation}
We say that $\Omega$ is a bounded Lipschitz domain if the function $\varphi$ in \eqref{eq6}  is of class $\mathcal{C}^{0, 1}$. 

In general, if one has a function space $X$ defined on $\mathbb R^n$ and a domain $\Omega \subset \Rn$, one defines the space $X(\Omega)$ as the restrictions of functions of $X$ from $\mathbb R^n$ to $\Omega$. In addition, the restriction space is endowed with the quasi-norm 
quotient.  In the cases that we are considering we have an intrinsic characterization of elements of $X(\Omega)$. We will use these characterizations in the proofs of Theorems 2 and 3. 
Let $\Omega$ be a bounded Lipschitz domain in  $\mathbb R^n$, $1<p<\infty$ and $0<s<1$. Then:
\begin{enumerate}
\item $f\in B_{p,p}^{s}(\Omega)$ if and only if $f\in L^{p}(\Omega)$ and
 \[
\int_{\Omega}\int_{\Omega}\frac{|f(x)-f(y)|^{p}}{|x-y|^{n+s p}}\mathrm{d}x\mathrm{d}y<\infty.\]
(e.g. \cite[p. 169]{Tar})
\item $f\in W^{s,p}(\Omega)$ if and only if $f\in L^{p}(\Omega)$ and 
\begin{equation}\label{sobolevs}
\int_{\Omega}\left(\int_{\Omega}\frac{|f(x)-f(y)|^{2}}{|x-y|^{n+2 s}}\mathrm{d}x\right)^{\frac{p}{2}}\mathrm{d}y<\infty.\end{equation} (e.g \cite[p. 1051]{Str})
\item $f\in W^{1,p}(\Omega)$ if and only if $f\in L^{p}(\Omega)$ and 
\begin{equation}\label{sobolev1}
\lim_{\alpha \to0}\alpha\int_{\Omega}\int_{\Omega}\frac{|f(x)-f(y)|^{p}}{|x-y|^{n+p-\alpha}}\mathrm{d}x\mathrm{d}y<\infty.\end{equation} (e.g.  \cite[p. 703]{Br2})
\end{enumerate}

\medskip

A smooth (of class $\mathcal C^1$) homogeneous Calder\'{o}n-Zygmund operator is a principal value convolution operator of type 
\begin{equation*}\label{eq6bis}
T(f)(x)= \text{p.v.} \int f(y)\,K(x-y) \, \mathrm{d}y \,,
\end{equation*}
where
$$
K(x)= \frac{\omega(x)}{|x|^{n}}\,,\quad x \neq 0\,,
$$
$\omega(x)$ being a homogeneous function of degree~$0$,
continuously differentiable on $\Rn \setminus\{0\}$ and with zero
integral on the unit sphere. Note that one trivially has
$$
|K(x-y)|\leq \frac{C}{|x-y|^n}
$$  
and
$$
|K(x-y)-K(x-y')|\leq C\frac{|y-y'|}{|x-y|^{n+1}}\quad\text{whenever }|x-y|\geq2|y-y'| .
$$
The Calder\'{o}n-Zygmund constant of the kernel of $T$ is defined as
\begin{equation}
\| T \|_{CZ} = \|K(x)\,|x|^n \|_\infty + \|\nabla
K(x)\,|x|^{n+1}\|_\infty\,.\label{CZ}
\end{equation}
The operator $T$ is said to be even if the kernel is even, namely,
if $\omega(-x)=\omega(x)\,,$ \,for all $ x \neq 0\,.$ 
The even character of $T$ gives the cancellation $T(\chi_{B})\chi_{B}=0 $ for each ball $B$, 
which should be understood as a local version of the global cancellation property T (1) = 0 common to all smooth homogeneous  Calder\'on-Zygmund operators. This extra cancellation property is essential for proving Lemma \ref{lema:acotacion} and so Theorem \ref{T3}.

It is well known that  Calder\'on-Zygmund convolution operators are bounded on $L^p(\mathbb R^n)$ and also on $W^{s,p}(\mathbb R^n)$ (because $W^{s.p} = G_{s}*L^p$).
Using the method of real interpolation, one easily gets that these operators are also bounded on
 $B^s_{p,q}(\mathbb R^n)$ (see also  \cite[6.7.2]{Gr} for a direct proof).
The boundedness of   Calder\'on-Zygmund convolution operators on
 $F^s_{p,q}(\mathbb R^n)$ was proved in \cite[Theorem 3.7]{FTW} (see \cite[Theorem 1.2]{JHL} for a nice proof).  Summarizing, if $s>0$ and $1<p, q<\infty$ we have
 \begin{equation}\label{CZdes}
  \|T f\|_{A^s_{p,q}}\le C  \| f\|_{A^s_{p,q}},
\end{equation}
where $C$ is a constant which depends on $s,p,q,n$ and $\| T \|_{CZ}$. \\

Lorentz spaces are defined on measure spaces $(Y,m)$, but we only need the case $Y=\mathbb C$ and $m$ is the Lebesgue planar measure. The classical definition of Lorentz spaces use the rearrangement function. For any measurable function $f$ we define its nonincreasing rearrangement by
$$
f^{*}(t) := \inf \{ s:   m\{z\in\mathbb C\colon|f(z)|>s\} \le t   \}.
$$
For $1\leq p,q<\infty$, the Lorentz space $L^{p,q}(\mathbb C)$ is the set of functions $f$  such that $\|f\|_{L^{p,q}}<\infty $, with
\begin{displaymath}
\| f\|_{L^{p,q}(\mathbb C)}:= \left\{ \begin{array}{ll}
(\int_0^{\infty} [ t^{1/p} f^{*}(t)]^q t^{-1}dt)^{1/q}, & \textrm{for  $ 1\le q<\infty$} \\[1mm]
\sup_{t>0} t^{1/p} f^{*}(t), & \textrm{for $q=\infty$}  
\end{array}
\right .
\end{displaymath}
A second definition of Lorentz spaces, which is equivalent to the first one, is given by real interpolation between Lebesgue spaces: 
$$
(L^{p_0}, L^{p_1})_{\theta, q}=L^{p,q},
$$
where $1\le p_0 <p<p_1 \le\infty$, $1\le q\le\infty$, $0<\theta<1$ and $\frac{1}{p}=\frac{1-\theta}{p_0}+ \frac{\theta}{p_1}$. Lorentz spaces inherited from Lebesgue spaces the stability property of the multiplication by bounded function, that is, if $f\in L^\infty$ and $g\in L^{p,q}$  then $fg\in L^{p,q}$ and we have
\begin{equation}\label{eq:vi1}
\|fg\|_{L^{p,q}}\leq \|f\|_\infty \|g\|_{L^{p,q}}\, .
\end{equation}
Let $1\leq p,q<\infty$ and consider $0<\alpha<2$. The Lorentz potential space, $I_\alpha(L^{p,q}(\mathbb C))$, is the set of functions $f$ such that $f=I_\alpha*g$, where $g\in L^{p,q}(\mathbb C)$ and $I_\alpha (x) = c_{\alpha}|x|^{\alpha -2}$ is the Riesz potential of order $\alpha$. The norm in this space is given by
$$
\|f\|_{I_\alpha(L^{p,q}(\mathbb C))}=\|g\|_{L^{p,q}}.
$$
Note that when $\alpha =1$, one has $ \|f\|_{I_1(L^{p,q}(\mathbb C))}\approx \|\nabla f\|_{L^{p,q}}$.  

It is well known \cite{St2} that functions $f$ of $I_1(L^{2,1}(\mathbb C))$ are continuous and there exists a constant $C$ such that
\begin{equation}\label{eq:vi2}
\|f\|_\infty\leq C\|f\|_{I_1(L^{2,1}(\mathbb C))}.
\end{equation}
In general $I_\alpha(L^{\frac{2}{\alpha},1}(\mathbb C))$ are embedded in $\mathcal C_0$, the space of continuous functions vanishing at the infinity (see \cite{Ba}). Again, a remarkable property of these spaces $I_\alpha(L^{\frac{2}{\alpha},1}(\mathbb C))$ is that they are multiplication algebras, that is,
\begin{equation}\label{LorenAlg}
\|   fg\|_{I_\alpha(L^{\frac{2}{\alpha},1})} \le C  \|   f\|_{I_\alpha(L^{\frac{2}{\alpha},1})} \|   g\|_{I_\alpha(L^{\frac{2}{\alpha},1})} .
\end{equation}

Finally, note that  Calder\'on-Zygmund convolution operators are bounded on $L^{p,q}(\mathbb R^n)$ and so also on Lorentz potential space, $I_\alpha(L^{p,q}(\mathbb C))$, with constant depending on  \eqref{CZ}.

\section{Invertibility of the Beltrami operator}\label{sec3}

As we mentioned in the Introduction, to prove Proposition \ref{Prop1} (and then Theorem \ref{T1}) and Proposition \ref{Prop2} we only have to consider the invertibility of the Beltrami operator
$I-\mu\,B$ on $A^{s}_{p,q}(\C)$ and on $I_{1}(L^{2,1})  (\C)$. Following the idea of Iwaniec \cite[p. 42--43]{I1} we define
\[
P_{m}=I+\mu B+\cdots+(\mu B)^{m}\, ,
\]
so that we have
\begin{equation*}
(I-\mu B)P_{n-1}=P_{n-1}(I-\mu B)=I - (\mu B)^{n}    =I-\mu^{n}B^{n}+K ,\label{eq:fredh}
\end{equation*}
where $K= \mu^{n}B^{n} - (\mu B)^{n} $
can be easily seen to be a finite sum of operators that contain as a factor the commutator $[\mu , B] =
\mu B -B\mu$. In Lemma \ref{Le2} (and in Lemma \ref{Le2bis})  we will prove that $[\mu, B ]$  is compact on $A^{s}_{p,q}(\C)$ (and on $I_{1}(L^{2,1})  (\C)$) , so that $K$ is also compact.
In Lemma \ref{Le1} we will check that the operator norm of $\mu^{n}B^{n}$ on $A^{s}_{p,q}(\C)$ (and on $I_{1}(L^{2,1})  (\C)$) is small if $n$ is large.
 Therefore, $I - \mu B$  is a Fredholm operator on $A^{s}_{p,q}(\C)$ (and on $I_{1}(L^{2,1})  (\C)$).
 Clearly $I - t \mu B$, $0\le t\le 1$, is a continuous path from the identity to $I - \mu B$ . By the index theory of Fredholm 
operators on Banach spaces (e.g. \cite{Sch}), the index is a continuous function of the operator. Hence 
$I-\mu B$ has index 0. On the other hand, $I-\mu B$ is injective  on $A^{s}_{p,q}(\C)$ (and on $I_{1}(L^{2,1})  (\C)$) because by \cite[p. 43]{I1} it is injective on $L^p(\C)$ for all $1<p<\infty$. That concludes that $I-\mu B$ is invertible.

\begin{lemma}\label{Le1}
\begin{enumerate}[(a)]
\item The operator norm of $\mu^{n}B^{n}$ on $A^{s}_{p,q}(\C)$ is small if $n$ is large.
\item The operator norm of $\mu^{n}B^{n}$ on $I_{1}(L^{2,1}  (\C))$ is small if $n$ is large.
\end{enumerate}
\end{lemma}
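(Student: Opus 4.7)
The strategy rests on the pointwise bound $\|\mu^n\|_\infty\le k^n$, which decays exponentially in $n$; the plan is to show that every other quantity appearing in the estimate grows at most polynomially in $n$, so that the combined bound tends to zero.

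Two polynomial-in-$n$ estimates are the key. First, a Moser-type (superposition) estimate for $\mu^n$ in the algebra $A^{s}_{p,q}(\C)$, of the form
$$
\|\mu^n\|_{A^{s}_{p,q}}\le C\,n\,k^{n-1}\|\mu\|_{A^{s}_{p,q}},
$$
which is standard for these multiplication algebras of continuous functions (apply the composition estimate of \cite{RS}, Ch.\ 5, to $F(t)=t^n$ restricted to $[-k,k]$, where $\|F'\|_\infty=nk^{n-1}$). Second, a polynomial bound for the Beurling iterate: the Fourier multiplier of $B^n$ is $(\bar\xi/\xi)^n=e^{-2in\theta(\xi)}$, which has modulus one but whose Mihlin--H\"ormander seminorms scale only like $n^{|\alpha|}$ in the derivative order. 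Running this through the multiplier theorem that lies behind \eqref{CZdes} yields $\|B^n\|_{A^{s}_{p,q}\to A^{s}_{p,q}}\le C\,n^{N}$ for some fixed $N=N(s,p,q)$.

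The conclusion for (a) then follows from a Kato--Ponce refined form of the algebra inequality \eqref{algebra1},
$$
\|\mu^n g\|_{A^{s}_{p,q}}\le C\bigl(\|\mu^n\|_\infty\|g\|_{A^{s}_{p,q}}+\|\mu^n\|_{A^{s}_{p,q}}\|g\|_\infty\bigr),
$$
applied with $g=B^n f$ together with the embedding $A^{s}_{p,q}\hookrightarrow L^\infty$ (valid since $sp>2$). Combining the three bounds gives $\|\mu^n B^n f\|_{A^{s}_{p,q}}\le C\,k^{n-1}\,n^{N+1}\,\|\mu\|_{A^{s}_{p,q}}\|f\|_{A^{s}_{p,q}}\to 0$. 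For part (b) the argument runs in parallel: $I_1(L^{2,1}(\C))$ is a multiplication algebra continuously embedded into $\mathcal C_0(\C)$ by \eqref{LorenAlg}--\eqref{eq:vi2}, the Beurling transform is bounded on it (last paragraph of Section~\ref{preli}), and both polynomial estimates above transfer to this Lorentz-potential setting.

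The main technical obstacle is the polynomial multiplier bound on $B^n$: the trivial iteration $\|B^n\|\le\|B\|^n$ is useless for getting smallness, and some structural input on $(\bar\xi/\xi)^n$ is needed. A cleaner alternative is to exploit that $B$ is an $L^2$-isometry, so that $\|\mu^n B^n\|_{L^2\to L^2}\le k^n$ is immediate, and to use real interpolation between $L^2$ and a slightly smoother space $A^{s+\varepsilon}_{p,q}$ to transfer this exponential smallness to $A^{s}_{p,q}$, bypassing any explicit estimate on $\|B^n\|$.
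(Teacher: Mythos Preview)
Your proposal is correct and follows essentially the same route as the paper: bound $\|\mu^n\|$ by a Moser-type power estimate (the paper invokes \cite[Theorem~5.3.2/4]{RS} to get $\|\mu^n\|_{A^s_{p,q}}\le C\|\mu\|_\infty^{n-1}\|\mu\|_{A^s_{p,q}}$), bound $\|B^n\|$ polynomially in $n$, and combine via the algebra inequality. The only notable difference is how the polynomial bound on $B^n$ is obtained: you sketch a Mihlin--H\"ormander argument on the multiplier $(\bar\xi/\xi)^n$, whereas the paper simply writes down the explicit kernel $b_n(z)=\dfrac{(-1)^n n}{\pi}\dfrac{\bar z^{\,n-1}}{z^{n+1}}$ of $B^n$ and reads off that its Calder\'on--Zygmund constant \eqref{CZ} is $\le Cn^2$, so \eqref{CZdes} gives $\|B^n\|_{A^s_{p,q}\to A^s_{p,q}}\le Cn^2$ directly; this is cleaner and makes the Kato--Ponce refinement and the interpolation alternative unnecessary.
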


\begin{proof}

Let $b_n=\dfrac{(-1)^n n}{\pi } \dfrac{\bar z^{n-1}}{z^{n+1}}$ the kernel of iterated Beurling transform $B^n$. Then, the Calder\'{o}n-Zygmund constant of  $B^n$ is
$$
\|b_{n}(z)|z|^{2}\|_{\infty}+\|\nabla
b_{n}(z)|z|^{3}\|_{\infty}\leq Cn^{2}.
$$

(a) It is an easy consequence of well-known results. Since $\| g^m \|_{A^{s}_{p,q}} \le C\|g \|^{m-1}_{\infty} \| g \|_{A^{s}_{p,q}}$ (see \cite[Teorem 5.3.2/4]{RS}), using  \eqref{algebra1} and \eqref{CZdes}, we have
\begin{eqnarray*}
\|\mu^{n}B^{n}(f)\|_{A^{s}_{p,q}} & \leq & C\:\|\mu^{n}\|_{A^{s}_{p,q}}\|B^{n}(f)\|_{A^{s}_{p,q}}  \\
& \leq & C\:\|\mu^{n}\|_{A^{s}_{p,q}}   n^2   \|f\|_{A^{s}_{p,q}}  \\
& \leq & C\: n^2 \|\mu \|^{n-1}_{\infty} \|\mu \|_{A^{s}_{p,q}}      \|f\|_{A^{s}_{p,q}} 
\end{eqnarray*}
and the norm becomes small if $n$ is big enough because $\| \mu \|_{\infty}= k<1$.
\\

(b) Using $ \|f\|_{I_1(L^{2,1})}\approx \|\nabla f\|_{L^{2,1}}$,  \eqref{LorenAlg}, \eqref{eq:vi1} and the boundedness of Calder\'on-Zygmund convolution operators, we have
\begin{eqnarray*}
\|\mu^{n}B^{n}(f)\|_{I_{1}(L^{2,1})} & \leq&  C\:\|\mu^{n}\|_{I_{1}(L^{2,1})}\|B^{n}(f)\|_{I_{1}(L^{2,1})}  \\
& \leq&  C\:\|\mu^{n}\|_{I_{1}(L^{2,1})}   n^2   \|f\|_{I_{1}(L^{2,1})} \\
& \leq&  C\: n^3 \|\mu \|^{n-1}_{\infty} \|\mu \|_{I_{1}(L^{2,1})}      \|f\|_{I_{1}(L^{2,1})}  
\end{eqnarray*}
and the norm becomes small if $n$ is big enough because $\| \mu \|_{\infty}= k<1$.

 \end{proof}

\begin{lemma}\label{Le2}
The commutator  $[\mu, B ]$  is compact on $A^{s}_{p,q}(\C)$.
\end{lemma}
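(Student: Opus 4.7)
My approach is a classical two-step argument: first reduce by density to smooth compactly supported $\mu$, then exploit the smoothing of the commutator kernel together with a Rellich-type embedding. By the algebra property \eqref{algebra1} and the boundedness \eqref{CZdes} of $B$ on $A^{s}_{p,q}(\C)$,
\[
\|[\nu, B] f\|_{A^s_{p,q}} \leq \|\nu\cdot Bf\|_{A^s_{p,q}} + \|B(\nu f)\|_{A^s_{p,q}} \leq C\,\|\nu\|_{A^s_{p,q}} \|f\|_{A^s_{p,q}},
\]
so $\nu \mapsto [\nu, B]$ is a bounded linear map from $A^{s}_{p,q}(\C)$ into the Banach algebra $\mathcal{L}(A^{s}_{p,q}(\C))$. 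Since $C^\infty_c(\C)$ is dense in $A^{s}_{p,q}(\C)$ and the compact operators form a norm-closed subspace, it suffices to prove the lemma assuming $\mu \in C^\infty_c(\C)$.

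For such a smooth, compactly supported $\mu$, the commutator has the integral representation
\[
[\mu, B] f(z) = -\frac{1}{\pi}\,\mathrm{p.v.}\int_{\C} \frac{\mu(z) - \mu(w)}{(z-w)^2}\, f(w)\, \mathrm{d}w,
\]
and the vanishing factor $\mu(z) - \mu(w)$ absorbs one order of the Calder\'on-Zygmund singularity, leaving a kernel bounded by $C\|\nabla \mu\|_\infty / |z-w|$. Classical Calder\'on commutator theory then identifies $[\mu, B]$ as a pseudodifferential operator of order $-1$, which should map $A^{s}_{p,q}(\C)$ continuously into $A^{s+1}_{p,q}(\C)$. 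With $\mu$ supported in a ball $B_R$, the splitting $[\mu, B]f(z) = \mu(z) Bf(z) - B(\mu f)(z)$ shows that for $|z| \geq 2R$ the first term vanishes while $-B(\mu f)(z)$ is an explicit smooth function decaying like $|z|^{-2}$, uniformly for $f$ in the unit ball of $A^{s}_{p,q}(\C)$. The Rellich-type compact embedding $A^{s+1}_{p,q}(B_{2R}) \hookrightarrow A^{s}_{p,q}(B_{2R})$, combined with a standard tail-truncation and diagonal subsequence argument, then delivers the required precompactness of $\{[\mu,B]f:\|f\|_{A^s_{p,q}}\le 1\}$ in $A^{s}_{p,q}(\C)$.

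The main obstacle is the order $-1$ smoothing claim for $[\mu, B]$ on the Triebel-Lizorkin/Besov scale. On $L^p$ this is classical Calder\'on commutator theory, but transferring it cleanly to $A^{s}_{p,q}(\C)$ requires either a careful Littlewood-Paley decomposition applied to the kernel $K(z,w) = (\mu(z)-\mu(w))/(z-w)^2$, or an appeal to mapping properties of order $-1$ pseudodifferential operators on these function spaces. An alternative is the ``commutator expansion'' writing $\mu(z)-\mu(w)$ as a line integral of $d\mu$ between $z$ and $w$, which reduces $[\mu,B]$ by Fubini to an average of Cauchy-type smoothing operators with a bounded amplitude.
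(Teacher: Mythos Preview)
Your overall architecture matches the paper's: bound the commutator via the algebra property and \eqref{CZdes}, reduce by density to $\mu\in C^\infty_c$, then exhibit a one-derivative gain and invoke the compact embedding $A^{s+1}_{p,q}\hookrightarrow A^{s}_{p,q}$ on bounded domains. The difference lies in how that smoothing is obtained. You invoke Calder\'on commutator / order $-1$ pseudodifferential theory on the Besov and Triebel--Lizorkin scales, which is correct in principle but which you yourself label the ``main obstacle'' and do not carry out; you also need an extra tail/diagonal argument to pass from local compactness to compactness on $A^{s}_{p,q}(\C)$. The paper bypasses all of this with an explicit identity. Setting $g=\mathsf{C}f$ (the Cauchy transform) and using $\partial\mathsf{C}=B$, $\bar\partial\mathsf{C}=I$, $B\bar\partial=\partial$, one computes
\[
[\mu,B]f=\mu\,\partial g - B(\mu\,\bar\partial g)=B(\bar\partial\mu\cdot\mathsf{C}f)-\partial\mu\cdot\mathsf{C}f.
\]
This writes the commutator as $B$ (bounded) composed with, and a sum of, operators of the form $f\mapsto\varphi\cdot\mathsf{C}f$ with $\varphi\in C^\infty_c$. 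The lifting property of the Cauchy transform (\cite[2.1.4]{RS}) gives $\varphi\cdot\mathsf{C}f\in A^{s+1}_{p,q}(\C)$, and since the support is contained in a fixed disk the compact embedding applies directly---no kernel estimates, no Littlewood--Paley decomposition, and no tail argument are needed. Your route would close once the order $-1$ mapping property on $A^{s}_{p,q}$ is quoted or proved, but the Cauchy-transform trick is both shorter and fully self-contained within the tools already set up in the paper.
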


\begin{proof}
First we have
\begin{eqnarray*}
\|[\mu,B]f\|_{A^{s}_{p,q}} & = & \|\mu Bf-B(\mu f)\|_{A^{s}_{p,q}}\\
  & \leq & \|\mu\|_{A^{s}_{p,q}}\|Bf\|_{A^{s}_{p,q}}+C \|\mu f\|_{A^{s}_{p,q}}\\
 & \leq  & C \|\mu\|_{A^{s}_{p,q}}\|f\|_{A^{s}_{p,q}}
 \end{eqnarray*}
and so the commutator is bounded in $A^{s}_{p,q}$.

Using that the limit of compact operators is a compact operator, we can assume that  $\mu \in\mathcal C_{c}^{\infty}(\mathbb{C})$, with its support contained in the disk $D(0, R)$. Now we use a trick from \cite[p. 145]{AIM}. Consider an arbitrary   function $g= \mathsf C \, f$ with $f\in A^{s}_{p,q} $, where $\mathsf C\, f$ denotes the Cauchy transform of $f$ 
(see \eqref{cauchy}). As $ \partial g= B(f)$, $\bar \partial g = f$ and $B( \bar\partial (\mu g))= \partial (\mu g)$,
\begin{equation*}
\begin{split}
\mu B(f) -B(\mu f) &  =  \mu \partial g - B(\mu \bar\partial g) = \mu \partial g - B( \bar\partial (\mu g)) +B(\bar\partial\mu  \, g)\\
& =   \mu \partial g -   \partial (\mu g)   + B(\bar\partial\mu  \, g) = B(\bar\partial\mu  \, g) - \partial\mu  \, g\\
& =  B(\bar\partial\mu  \, \mathsf C\, f) - \partial\mu  \, \mathsf C\, f
\end{split}
\end{equation*}
From this representation one can see that $[\mu, B ]$  is compact.
Given $\varphi\in \mathcal C_{c}^{\infty}(  D(0, R))$ the operator $\varphi\, \mathsf C\, f$ is a compact
operator on $A^{s}_{p,q}(\C)$, because by the lifting property (see \cite[2.1.4]{RS}) $\varphi\, \mathsf C\, f\in A^{s+1}_{p,q}(\C)$,  obviously $\varphi\, \mathsf C\, f(z)=0$ if $|z|\ge R$ and the inclusion of
$ A^{s+1}_{p,q}( D(0, R))$ into $ A^{s}_{p,q}(D(0, R))$ is compact (e.g. \cite[2.4.4]{RS}).

\end{proof}

\begin{lemma}\label{Le2bis}
The commutator  $[\mu, B ]$  is compact on $I_{1}(L^{2,1} (\C)$.
\end{lemma}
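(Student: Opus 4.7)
The proof will parallel that of Lemma \ref{Le2}, with $A^{s}_{p,q}(\mathbb{C})$ replaced throughout by $I_{1}(L^{2,1}(\mathbb{C}))$. First, the boundedness of $[\mu,B]$ on $I_{1}(L^{2,1})$ follows from the algebra property \eqref{LorenAlg} together with the boundedness of the Calder\'on--Zygmund operator $B$ on $I_{1}(L^{2,1})$:
$$
\|[\mu,B]f\|_{I_{1}(L^{2,1})}\le \|\mu\|_{I_{1}(L^{2,1})}\|Bf\|_{I_{1}(L^{2,1})}+C\|\mu f\|_{I_{1}(L^{2,1})}\le C\|\mu\|_{I_{1}(L^{2,1})}\|f\|_{I_{1}(L^{2,1})}.
$$
Since the space of compact operators is closed in the operator norm and $\mu\mapsto[\mu,B]$ is linear in $\mu$, I can reduce by density to the case $\mu\in\mathcal{C}_{c}^{\infty}(\mathbb{C})$ with support in some disk $D(0,R)$.

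For such $\mu$ I would repeat the algebraic trick from \cite[p.~145]{AIM} that appears in Lemma \ref{Le2}: setting $g=\mathsf{C}f$ and using $\partial g=Bf$, $\bar\partial g=f$ together with $B(\bar\partial(\mu g))=\partial(\mu g)$, one obtains
$$
[\mu,B]f \;=\; B(\bar\partial\mu\cdot \mathsf{C}f)-\partial\mu\cdot \mathsf{C}f.
$$
Because $B$ is bounded on $I_{1}(L^{2,1})$, compactness of $[\mu,B]$ reduces to showing that for every $\varphi\in\mathcal{C}_{c}^{\infty}(D(0,R))$ the operator $S_{\varphi}\colon f\mapsto \varphi\,\mathsf{C}f$ is compact on $I_{1}(L^{2,1}(\mathbb{C}))$.

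The key point is the familiar gain of one derivative for the Cauchy transform: since $\partial\mathsf{C}f=Bf$ and $\bar\partial\mathsf{C}f=f$, the $L^{2,1}$-boundedness of $B$ implies that if $f\in I_{1}(L^{2,1})$ then $\nabla\mathsf{C}f\in L^{2,1}$ locally, so $\varphi\,\mathsf{C}f$ lies in $I_{2}(L^{2,1})$ with support in $\overline{D(0,R)}$. Hence $S_{\varphi}$ factors through the closed subspace of $I_{2}(L^{2,1})$ of functions supported in that fixed compact set. The main obstacle is then to prove the Rellich-type compact embedding of this subspace into $I_{1}(L^{2,1})$; I expect to handle it via a Fr\'echet--Kolmogorov argument applied to the gradients of such functions (equicontinuity and tightness of translates in the Lorentz quasinorm $L^{2,1}(\mathbb{C})$, exploiting the fixed compact support together with the boundedness of their derivatives in $L^{2,1}$), or alternatively by real interpolation from the classical Rellich compactness in $L^{p}$ on both sides of $p=2$.
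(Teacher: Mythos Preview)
Your approach differs from the paper's, and there is a genuine gap that is not addressed in your sketch.

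The paper does \emph{not} transport the Cauchy-transform identity from Lemma~\ref{Le2} to $I_{1}(L^{2,1})$. The reason is precisely the step you take for granted: for $f\in I_{1}(L^{2,1}(\mathbb{C}))$ the Cauchy transform $\mathsf{C}f$ need not be defined. Functions in $I_{1}(L^{2,1})$ are only known to lie in $C_{0}(\mathbb{C})\subset L^{\infty}$ (this is the content of \eqref{eq:vi2}); they are \emph{not} in any $L^{p}$ with $p<\infty$ in general, and the kernel $1/w$ is not integrable at infinity. So the integral defining $\mathsf{C}f$ diverges, and the chain of identities $\bar\partial\mathsf{C}f=f$, $\partial\mathsf{C}f=Bf$, $B(\bar\partial(\mu g))=\partial(\mu g)$ is not available without a renormalization that you do not provide. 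In Lemma~\ref{Le2} this issue does not arise because $A^{s}_{p,q}\subset L^{p}$. If you want to rescue your route, you would have to replace $\mathsf{C}$ by a suitably modified solution operator for $\bar\partial$ on $I_{1}(L^{2,1})$ and rederive the commutator identity; this is nontrivial and not a detail that can be left to the reader.

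What the paper does instead is to move the problem down to $L^{2,1}$. Using $\|F\|_{I_{1}(L^{2,1})}\approx\|\nabla F\|_{L^{2,1}}$ and the identity $\partial_{j}[\mu,B]=[\partial_{j}\mu,B]+[\mu,B]\partial_{j}$, one is reduced to two facts: (i) $[\mu,B]$ is compact on $L^{2,1}$, which follows from Uchiyama's $L^{p}$ compactness together with real interpolation of compact operators \cite{CoP}; and (ii) for $a\in\mathcal{C}_{c}^{\infty}$, the map $[a,B]\colon I_{1}(L^{2,1})\to L^{2,1}$ is compact. Part (ii) is obtained by approximating $B$ by a smoothed kernel $B^{\eta}$, checking that $[a,B-B^{\eta}]\to 0$ in operator norm (using \eqref{eq:vi2} to bound $\|f\|_{\infty}$), and verifying compactness of $[a,B^{\eta}]$ via the Fr\'echet--Kolmogorov criterion in $L^{2,1}$. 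So the Fr\'echet--Kolmogorov and interpolation ideas you mention are indeed the right tools, but they are deployed at the level of $L^{2,1}$ rather than through a Rellich embedding for $I_{2}(L^{2,1})$, and without ever invoking $\mathsf{C}f$.
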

\begin{proof}
As above we have
\begin{equation}\label{acot}
\begin{split}
\|[\mu,B]f\|_{I_{1}(L^{2,1})} & =  \|\mu Bf-B(\mu f)\|_{I_{1}(L^{2,1})}\\
  & \leq  \|\mu\|_{I_{1}(L^{2,1})}\|Bf\|_{I_{1}(L^{2,1})}+C \|\mu f\|_{I_{1}(L^{2,1})}\\
 & \leq   C \|\mu\|_{I_{1}(L^{2,1})}\|f\|_{I_{1}(L^{2,1})}
 \end{split}
 \end{equation}
and so the commutator is bounded in $I_{1}(L^{2,1})$.
So, by density,  we only need to prove the compactness of the commutator when $\mu \in \mathcal C^{\infty}_c.$

On the other hand, 
\begin{eqnarray*}
\| [\mu,B]f\|_{I_{1}(L^{2,1})} & = & \sum_{j=1}^{2}\|\partial_{j}(\mu B(f)-B(\mu f))\|_{L^{2,1}}\\
& = &  \sum_{j=1}^{2}\|  [\partial_{j}\mu,B]f +  [\mu,B](\partial_{j}f)\|_{L^{2,1}}  \\
 & \leq & \sum_{j=1}^{2}\|[\partial_{j}\mu,B]f\|_{L^{2,1}}+\|[\mu,B](\partial_{j}f)\|_{L^{2,1}}.
 \end{eqnarray*}

Since the commutator is a compact operator in $L^p$ when $\mu$ is smooth  \cite{Uchi}  and using real interpolation of compact operators   \cite{CoP} we have that 
 $[\mu,B]\colon L^{2,1}(\mathbb{C})\to L^{2,1}(\mathbb{C})$ is compact.

Therefore we only have to prove that $[a,B]\colon I_{1}(L^{2,1}(\mathbb{C}))\to L^{2,1}(\mathbb{C})$
is a compact operator when $a\in \mathcal C_{c}^{\infty}(B(0,R))$ for some $R>0.$ Given $\eta >0$ we consider a regularization of the Beurling transform

$$
B^{\eta}f(z)= - \frac{1}{\pi}\,   \text{p.v.}  \int f(z-w)K_{\eta}(w) \mathrm{d}w  \,,
$$
where $K_{\eta}(z)=\dfrac{\varphi_{\eta}(z)}{z^2}$ and $0\le \varphi_{\eta}(z) \le 1$ is a radial 
$\mathcal C^{\infty}$ function satisfying 
$\varphi_{\eta}(|z|)=0$ if $|z|<\frac{\eta}{2}$ and  $\varphi_{\eta}(|z|)=1$ if $|z|> \eta.$
It is easy to check that $B^{\eta}$ is a convolution Calder\'{o}n-Zygmund operator with constants depending on $\eta.$ 

In the rest of this proof we will use the estimate  \eqref{eq:vi2} without any mention.
For any $f\in {I_{1}(L^{2,1})},$ the function $[a,B-B^{\eta}](f)$ has compact support. On the other hand, 

\begin{eqnarray*}
|[a,B-B^{\eta}](f)(z)| =\left|\frac{-1}{\pi}\int(a(z)-a(y))\left(\frac{1}{(z-y)^2}-\frac{\varphi_{\eta}(z-y)}{(z-y)^2}\right)f(y) \mathrm{d}y\right|\\[2mm]
   \le C\|f\|_{\infty}\|\nabla a\|_{\infty} \int_{|z-y|<\eta}\frac{1}{|z-y|}\,\mathrm{d}y\le C\eta\|f\|_{I_1( L^{2,1})}\|\nabla a\|_{\infty}.
\end{eqnarray*}

Consequently the operator $[a,B^{\eta}]$
tends to $[a,B]$ when  $\eta\to 0.$ To prove that  $[a,B^{\eta}]\colon I_{1}(L^{2,1})\to L^{2,1}$ 
is compact we will use Fr\'{e}chet-Kolgomorov Theorem for Lorentz spaces (e.g. \cite[p. 111]{Br1} for $L^p$ spaces).

By \eqref{acot}, the image by $[a,B^{\eta}]$ of the unit ball of $I_{1}(L^{2,1}(\mathbb C))$ is uniformly bounded in $L^{2,1}(\mathbb C).$
To get the equicontinuity, take $f\in I_{1}(L^{2,1})$ and $|z-w|<\frac{\eta}{8}.$ Then,

\begin{eqnarray*}
[a,B^{\eta}]f(z)-[a,B^{\eta}]f(w) & = & \frac{-1}{\pi}\left((a(z)-a(w)\right)\int_{\mathbb{C}}\frac{\varphi_{\eta}(z-\xi)}{(z-\xi)^2}f(\xi)\mathrm{d}\xi\\
 & + & \frac{-1}{\pi} \int_{\mathbb{C}}\left(\frac{\varphi_{\eta}(z-\xi)}{(z-\xi)^2}-\frac{\varphi_{\eta}(w-\xi)}{(w-\xi)^2}\right)\left(a(w)-a(\xi)\right)f(\xi)\mathrm{d}\xi\\
 & = & \theta_{1}(z,w)+\theta_{2}(z,w).
\end{eqnarray*} 
Since $B^{\eta}$ is a convolution Calder\'{o}n-Zygmund operator 
$$
|\theta_{1}(z,w)|  = \frac{1}{\pi}|\left(a(z)-a(w)\right)B^{\eta}f(z)| \le  C_{\eta}|z-w|\|\nabla a\|_{\infty}\|f\|_{I_{1}(L^{2,1})}
$$
and 
 \begin{eqnarray*}
|\theta_{2}(z,w)| & = & \frac{1}{\pi} \left|\int_{\mathbb{C}\setminus B(z,\frac{\eta}{4})}\left(\frac{\varphi_{\eta}(z-\xi)}{(z-\xi)^2}-
\frac{\varphi_{\eta}(w-\xi)}{(w-\xi)^2}\right)
\left(a(w)-a(\xi)\right)f(\xi)\mathrm{d}\xi\right|\\
 &  \leq &C |z-w|\|f\|_{\infty}\|a\|_{\infty}\left\{  \int_{|z-\xi|>\frac{\eta}{8}} \frac{1}{|z-\xi|^3} \mathrm{d}\xi + \int_{2\eta >|z-\xi|>\frac{\eta}{8}} \frac{\| \nabla \varphi_{\eta}  \|_{\infty}}{|z-\xi|^2} \mathrm{d}\xi \right\}  \\
&\leq & \frac{C}{\eta} |z-w| \|f\|_{I_1(L^{2,1})}.
\end{eqnarray*}

Therefore
\begin{equation}
|  [a,B^{\eta}]f(z)-[a,B^{\eta}]f(w)  | \le C |z-w| \|f\|_{I_1(L^{2,1})},\label{equi1}
\end{equation}
where the constant $C$ depends on $a$ and $\eta$.

 On the other hand, if $|z|>M>2R$ 
\begin{eqnarray*}
|[a,B^{\eta}]f(z)| & = & \left|\int_{\mathbb{C}}(a(z)-a(w))\frac{\varphi_{\eta}(z-w)}{(z-w)^2}f(w)\mathrm{d}w\right|\\
  & \leq & \|f\|_{\infty}\|a\|_{\infty}\int_{|w|<R}\frac{1}{|z-w|^2}\mathrm{d}w\\
  & \leq & C\|f\||_{I_1(L^{2,1})}\|a\|_{\infty}\frac{1}{|z|^{2}},
\end{eqnarray*} 
and then 
\begin{equation}
\|[a,B^{\eta}](f)\chi_{\mathbb C\setminus B(0,M)}\|_{L^{2,1}}\le C\|f\|_{I_1{L^{2,1}}}\|a\|_{\infty}\|\frac{1}{|z|^2}
\chi_{\mathbb C\setminus B(0,M)}\|_{L^{2,1}} ,\label{equi2}
\end{equation}
which tends to $0$ as $M\rightarrow 0$. Combining \eqref{equi1}  and \eqref{equi2}, by  Fr\'{e}chet-Kolgomorov Theorem for Lorentz spaces, one gets that  $[a,B^{\eta}]$ is a compact operator from $I_{1}(L^{2,1}(\mathbb{C}))$
to $L^{2,1}(\mathbb{C})$ as we desired.

\end{proof}

\section{ Calder\'{o}n-Zygmund operators on domains}\label{CZdo}

In this section we will prove  Theorem \ref{T3}. Let $X(\Omega)$ denote any of  function spaces in the statement of Theorem \ref{T3} and let $f\in X(\Omega)$. It is clear from the Calder\'{o}n-Zygmund theory that $T_{\Omega}f\in L^p(\Omega)$. So, in order to study the behaviour of $T_{\Omega}$ on $X(\Omega)$,  we must deal with  $T_{\Omega}f(x) -T_{\Omega}f(y)$ because we have a characterization of $X(\Omega)$ using first differences.  Following \cite{Me} we consider the next decomposition.

\begin{lemma} \label{lema:diferenciasT}

Let $\psi \in \mathcal{C}^{\infty}_{c}$ such that $\psi(u)=1$ on $|u|\leq2$
and $\psi(u)=0$ if $|u|\geq 4$. Define $\eta(u)=1-\psi(u)$. Then:  
\begin{equation*}
T_{\Omega}f(y)-T_{\Omega}f(x):=\sum_{i=1}^{4}g_{i}(x,y)+f(x)(T\chi_{\Omega}(y)-T\chi_{\Omega}(x)),\label{eq:difer}\end{equation*}
 where \begin{eqnarray*}
g_{1}(x,y) & = & \int_{\Omega}(K(y-u)-K(x-u))(f(u)-f(x))\ \eta\left(\frac{u-x}{|y-x|}\right)\mathrm{d}u,\\
g_{2}(x,y) & = & -\int_{\Omega}K(x-u)(f(u)-f(x))\ \psi\left(\frac{u-x}{|y-x|}\right)\mathrm{d}u,\\
g_{3}(x,y) & = & \int_{\Omega}K(y-u)(f(u)-f(y))\ \psi\left(\frac{u-x}{|y-x|}\right)\mathrm{d}u,\\
g_{4}(x,y) & = & (f(y)-f(x))\int_{\Omega}K(y-u)\ \psi\left(\frac{u-x}{|y-x|}\right)\mathrm{d}u.\end{eqnarray*}
 \end{lemma}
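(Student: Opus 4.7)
The plan is to start from the raw difference
$$
T_{\Omega}f(y)-T_{\Omega}f(x) = \text{p.v.}\int_{\Omega}\bigl[K(y-u)-K(x-u)\bigr]f(u)\,\mathrm{d}u
$$
and to insert the partition of unity $1 = \psi((u-x)/|y-x|) + \eta((u-x)/|y-x|)$. This splits the integrand into a \emph{far} piece supported where $|u-x|\ge 2|y-x|$, on which the smoothness bound $|K(y-u)-K(x-u)|\le C|y-x|/|x-u|^{n+1}$ will later make $g_{1}$ absolutely convergent, and a \emph{near} piece supported on the annulus $|u-x|\le 4|y-x|$, which also contains $y$ and on which a separate treatment of the two kernels is required.

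On the far piece, I would write $f(u) = f(x) + (f(u)-f(x))$: the summand carrying $f(u)-f(x)$ is exactly $g_{1}$, and the summand carrying $f(x)$ leaves behind a pure-kernel remainder $R_{\mathrm{far}} := f(x)\int_{\Omega}[K(y-u)-K(x-u)]\,\eta\,\mathrm{d}u$. On the near piece, I would separate the two kernels and treat them independently: in $\int_{\Omega}K(y-u)\,\psi\,f(u)\,\mathrm{d}u$ I add and subtract $f(y)$, producing $g_{3}$ plus a remainder $R_{y} := f(y)\int_{\Omega}K(y-u)\,\psi\,\mathrm{d}u$; in $-\int_{\Omega}K(x-u)\,\psi\,f(u)\,\mathrm{d}u$ I add and subtract $f(x)$, producing $g_{2}$ (the minus sign is built into its definition) plus a remainder $R_{x} := -f(x)\int_{\Omega}K(x-u)\,\psi\,\mathrm{d}u$.

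The remaining step is pure bookkeeping on $R_{\mathrm{far}}+R_{y}+R_{x}$. Using $\eta = 1-\psi$ in $R_{\mathrm{far}}$ I would rewrite
$$
R_{\mathrm{far}} = f(x)\bigl[T\chi_{\Omega}(y)-T\chi_{\Omega}(x)\bigr] - f(x)\int_{\Omega}K(y-u)\,\psi\,\mathrm{d}u + f(x)\int_{\Omega}K(x-u)\,\psi\,\mathrm{d}u.
$$
The last summand cancels against $R_{x}$, and what is left after adding $R_{y}$ is $f(x)[T\chi_{\Omega}(y)-T\chi_{\Omega}(x)] + (f(y)-f(x))\int_{\Omega}K(y-u)\,\psi\,\mathrm{d}u$, whose second term is exactly $g_{4}$. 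Assembling everything gives the announced identity.

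The only point demanding a little care is the principal-value reading: the two original terms $T_{\Omega}f(y)$ and $T_{\Omega}f(x)$ are principal values, and the localized pure-kernel integrals $\int K(y-u)\,\psi\,\mathrm{d}u$, $\int K(x-u)\,\psi\,\mathrm{d}u$ still need the p.v. interpretation near their singular point, but each $g_{i}$ is absolutely convergent (thanks to the kernel-difference bound for $g_{1}$ and to the $f$-difference factor for $g_{2},g_{3}$) and the entire singular-integral content has been isolated in the term $f(x)[T\chi_{\Omega}(y)-T\chi_{\Omega}(x)]$. This isolation is, of course, what makes the lemma useful: it is what will reduce the boundedness of $T_{\Omega}$ on $X(\Omega)$ to the hypothesis $T\chi_{\Omega}\in X(\Omega)$, in the spirit of Meyer \cite{Me}.
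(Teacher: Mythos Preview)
Your argument is correct and follows essentially the same route as the paper: insert the partition of unity $\psi+\eta=1$ (rescaled and translated by $(x,|y-x|)$), subtract off $f(x)$ on the $\eta$-piece and $f(x),f(y)$ on the two halves of the $\psi$-piece, and then regroup the constant remainders using $\eta=1-\psi$ so that what survives is $g_{4}+f(x)[T\chi_{\Omega}(y)-T\chi_{\Omega}(x)]$. The paper merely records the resulting five-term formula and then specializes $\tilde\psi(u)=\psi((u-x)/|y-x|)$; your write-up makes the intermediate bookkeeping explicit, but the content is the same. One small caveat in your closing remark: the integral in $g_{4}$ still carries the singularity of $K(y-\cdot)$ at $u=y$ and is a principal value (its boundedness is exactly what Lemma~\ref{lema:acotacion} supplies), so it is not accurate to say that \emph{all} the singular-integral content has been pushed into the $T\chi_{\Omega}$ term.
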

 
\begin{proof}
Note that if  $\tilde\psi(w) +\tilde\eta(w)=1$ we can write
\begin{eqnarray*}
T_{\Omega}f(x) & = & f(x)T_{\Omega}\tilde\psi(x)+\int_{\Omega}K(x- w)(f(w)-f(x))\ \tilde\psi(w)\mathrm{d}w\\
 &  & +\int_{\Omega} K(x- w)f(w)\tilde\eta(w)\mathrm{d}w,
 \end{eqnarray*}
 and then 
 \begin{eqnarray*}
 T_{\Omega}f(y)-T_{\Omega}f(x)& = & \int_{\Omega}(K(y-u)-K(x-u))(f(u)-f(x))\ \tilde\eta(u)\mathrm{d}u\\
 & - & \int_{\Omega}K(x-u)(f(u)-f(x))\  \tilde\psi(u)\mathrm{d}u\\
 & + & \int_{\Omega}K(y-u)(f(u)-f(y))\  \tilde\psi(u)\mathrm{d}u\\
 & + & (f(y)-f(x))\int_{\Omega}K(y-u)\  \tilde\psi(u)\mathrm{d}u\\
 & + & f(x)(T\chi_{\Omega}(y)-T\chi_{\Omega}(x)).
  \end{eqnarray*}
  Given $x\ne y$, take $\tilde\psi(u) = \psi\left(\frac{u-x}{|y-x|}\right)$ and $\tilde\eta(u) = \eta\left(\frac{u-x}{|y-x|}\right)$ and that is what we wished to prove.
 \end{proof}
 
 Let $B=B(x_{0},r)$ be the ball in $\mathbb R^n$ of center $x_{0}$ and radius $r$ and  $\varphi_{B}$ denotes a smooth function supported in $B$ such that $\| \varphi_{B} \|_{\infty}\le 1$ and $\| \nabla \varphi_{B} \|_{\infty}\le r^{-1}$. To deal with the term $g_{4}$ we will use the next lemma, which is an application of the Main Lemma of \cite{MOV}.
 
 \begin{lemma} \label{lema:acotacion}
 Let $\Omega$ be a bounded domain of $\mathbb R^n$ with boundary of class  $\mathcal{C}^{1,\beta}$, $\beta >0$, and let $T$ be an even smooth homogeneous Calder\'{o}n-Zygmund operator.
 Then, there exists a constant $C=C(\Omega)$ such that  $\|T_{\Omega}\varphi_{B}\|_{\infty}\leq C$. 
\end{lemma}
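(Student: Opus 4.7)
The plan is to estimate $T_\Omega\varphi_B(x)=\int_{B\cap\Omega}K(x-u)\,\varphi_B(u)\,du$ uniformly in $x\in\Omega$ and in the ball $B=B(x_0,r)$. When $x\notin 2B$, one has $|x-u|\gtrsim |x-x_0|$ for every $u\in B$, so the pointwise bound $|K|\le C|x-u|^{-n}$ together with $|\varphi_B|\le\chi_B$ gives immediately
$$ |T_\Omega\varphi_B(x)|\le C\,\frac{r^n}{|x-x_0|^n}\le C, $$
and the problem is reduced to $x\in 2B\cap\Omega$.

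For such $x$ I would add and subtract $\varphi_B(x)$ inside the integral and split
$$ T_\Omega\varphi_B(x)=\varphi_B(x)\,T\chi_{B\cap\Omega}(x)+\int_{B\cap\Omega}K(x-u)\bigl(\varphi_B(u)-\varphi_B(x)\bigr)\,du. $$
Since $x$ and any $u$ contributing to the second summand lie in a set of diameter $O(r)$, the Lipschitz bound $|\varphi_B(u)-\varphi_B(x)|\le r^{-1}|u-x|$ combined with $|K(x-u)|\le C|x-u|^{-n}$ gives a contribution at most $C r^{-1}\int_{|u-x|\le 3r}|x-u|^{1-n}\,du\le C$, independently of $r$. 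Since $\|\varphi_B\|_\infty\le 1$, the whole matter boils down to the uniform estimate $|T\chi_{B\cap\Omega}(x)|\le C$ for $x\in 2B\cap\Omega$. Writing $\chi_{B\cap\Omega}=\chi_B-\chi_{B\setminus\Omega}$, the piece $T\chi_B$ is globally bounded on $\mathbb R^n$: it vanishes on $B$ by the evenness cancellation $T\chi_B\cdot\chi_B=0$ recalled in the preliminaries, and a direct size estimate on $K$ shows $|T\chi_B(x)|\le C r^n/|x-x_0|^n$ for $x\notin B$. The outstanding task is therefore to bound $T\chi_{B\setminus\Omega}(x)$ uniformly for $x\in 2B\cap\Omega$.

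This last bound is the main obstacle, and it is here that the hypothesis $\partial\Omega\in\mathcal C^{1,\beta}$ and the Main Lemma of \cite{MOV} enter. The heuristic I would follow is that, because $\partial\Omega$ is almost flat on the scale of $B$, the set $B\setminus\Omega$ is well approximated by $B\cap H$, where $H$ is the outer half-space determined by the tangent hyperplane to $\partial\Omega$ at the boundary point closest to $x$; for an even smooth homogeneous Calder\'on-Zygmund kernel, $T\chi_H$ is constant on each of the two open half-spaces (by translation invariance along the hyperplane together with the homogeneity of degree zero of $T$) and hence uniformly bounded, while the error $\chi_{B\setminus\Omega}-\chi_{B\cap H}$ is concentrated in a thin curved layer whose thickness is controlled by the $\mathcal C^{1,\beta}$ modulus of $\partial\Omega$, so its $T$-image is also bounded. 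Packaging this argument is exactly the content of the Main Lemma of \cite{MOV}, and invoking it in this localized form produces the required constant $C=C(\Omega)$, completing the proof.
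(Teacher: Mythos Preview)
Your argument is essentially the paper's: treat $x$ far from $B$ by size, and for $x$ close subtract $\varphi_B(x)$ to reduce, via the Lipschitz bound on $\varphi_B$, to bounding $T$ on a characteristic function, then invoke the evenness cancellation together with the Main Lemma of \cite{MOV}. Two small points deserve attention. First, since $\varphi_B$ is supported in $B$, the factor $\varphi_B(x)$ in $\varphi_B(x)\,T\chi_{B\cap\Omega}(x)$ vanishes for $x\notin B$, so you only need $x\in B\cap\Omega$; your claim that a ``direct size estimate'' yields $|T\chi_B(x)|\le Cr^n/|x-x_0|^n$ for all $x\notin B$ is not valid for $x$ near $\partial B$ (the integral $\int_B|x-u|^{-n}\,du$ diverges there), but it is also not needed. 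Second, the paper first disposes of balls with $3r\ge r_0$ (where $r_0$ is the scale of the boundary charts of $\Omega$) by observing that $\|\varphi_B\|_{C^{0,\beta}}\le 1+r^{-\beta}$, so the Main Lemma of \cite{MOV} directly gives $|T_\Omega\varphi_B|\le C(1+r^{-\beta})\le C(1+(3/r_0)^\beta)$; only after this does one treat $3r<r_0$ by the localization you describe, which now fits inside a single chart and makes the localized appeal to \cite{MOV} clean.
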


 \begin{proof} Since the $\mathcal{C}^{0,\beta}$ norm of $\varphi_{B}$ is bounded by $1+ r^{-\beta}$, by the Main Lemma of \cite{MOV} we have 
 $$
 \begin{array}{ll}
 | T_{\Omega}\varphi_{B}(x) | \le C(1+ r^{-\beta})\, ,  & \text{for all $x\in\mathbb C$ and} \\[2mm]
 | T_{\Omega}\varphi_{B}(x) -T_{\Omega}\varphi_{B}(y)   | \le C r^{-\beta} |x-y|^{\beta}, & \forall x,y \in \Omega.
  \end{array}
 $$
 
 Associated to the domain  $\Omega$ there is a $r_{0}>0$  satisfying \eqref{eq6}. Then, if $3r\ge r_{0}$  one has $ | T_{\Omega}\varphi_{B}(x) | \le C(1+ (\frac{3}{r_{0}})^{\beta})$ for all $x\in \mathbb C$. If $3r <r_{0}$ we write
\begin{eqnarray*}
T_{\Omega}\varphi_{B}(x) & = & \int_{\Omega}K(x-y)\varphi_{B}(y)\mathrm{d}y = \int_{\Omega\cap 3B}K(x-y)\varphi_{B}(y)\mathrm{d}y      \\
 & = & \int_{\Omega\cap 3B}K(x-y)(\varphi_{B}(y)-\varphi_{B}(x))\mathrm{d}y+\varphi_{B}(x)\int_{\Omega\cap 3B}K(x-y)\mathrm{d}y\\[1mm]
 & = & p(x)+ q(x)\end{eqnarray*}
 For $p(x)$ we have
  \[
|p(x)|\le C\int_{\Omega\cap 3B}\frac{|\varphi_{B}(x)-\varphi_{B}(y)|}{|x-y|^{n}}\mathrm{d}y\le C\|\nabla\varphi_{B}\|_{\infty}\int_{3B}\frac{\mathrm{d}y}{|x-y|^{n-1}}\leq C.
\]
If $x\notin B$, $q(x)=0$, and for $x\in B$ one can prove
$$
\left| \int_{\Omega\cap 3B}K(x-y)\mathrm{d}y\right| \le C(\Omega),
$$
proceeding as in the proof of the Main Lemma of  \cite[p. 408-410]{MOV}. Observe that for $x\in B$ the function $T_{\Omega} (\chi_{3B})$ has the same behaviour that $T_{\Omega} (1)=T(\chi_{\Omega})$ 

\end{proof}

Let's continuous with the proof of Theorem \ref{T3}. In the case that $f\in B^{s}_{p,p}(\Omega)$, $0<s<1$, $n<sp<\infty$, we have to prove that
 \begin{equation}
\int_{\Omega}\int_{\Omega}\frac{|T_{\Omega}f(x)-T_{\Omega}f(y)|^{p}}{|x-y|^{n+s p}}\mathrm{d}x\mathrm{d}y<\infty.\label{eq:carcoroBesov}
\end{equation}
 By Lemma \ref{lema:diferenciasT},
  \[
T_{\Omega}f(y)-T_{\Omega}f(x)=\sum_{i=1}^{4}g_{i}(x,y)+f(x)(T\chi_{\Omega}(y)-T\chi_{\Omega}(x))
\]
and we will study each term separately. Since $f$ is bounded (because $n<sp<\infty$) and $T\chi_{\Omega}\in  B^{s}_{p,p}(\Omega)$
 \begin{equation*}
\int_{\Omega}\int_{\Omega} \frac{|f(x)(T\chi_{\Omega}(y)-T\chi_{\Omega}(x))|^{p}}{|x-y|^{n+s p}}\mathrm{d}x\mathrm{d}y
\le \| f\|_{\infty} \int_{\Omega}\int_{\Omega} \frac{|T\chi_{\Omega}(y)-T\chi_{\Omega}(x)|^{p}}{|x-y|^{n+s p}}\mathrm{d}x\mathrm{d}y<\infty.
\end{equation*}

Fix $t$ such that $s<t<1$. Using the properties of the kernel $K$ and the H\"older's inequality ($ \frac{1}{p} +\frac{1}{q} =1$), we have
 \begin{eqnarray*}
|g_{1}(x,y)| & \le & C \int_{\Omega\cap\{|u-x|>2|x-y|\}}|K(x-u)-K(y-u)||f(u)-f(x)|\mathrm{d}u\\
 & \le & C \int_{\Omega\cap\{|u-x|>2|x-y|\}}\frac{|x-y|}{|x-u|^{n+1}}|f(u)-f(x)|\mathrm{d}u\\
& = & C |x-y|\int_{\Omega\cap\{|u-x|>2|x-y|\}}\frac{|f(u)-f(x)|}{|x-u|^{\frac{n}{p}+t}}\frac{1}{|x-u|^{\frac{n}{q}-t+1}}\mathrm{d}u\\
 & \le & C|x-y|\left(\int_{\Omega\cap\{|u-x|>2|x-y|\}}\frac{|f(u)-f(x)|^{p}}{|x-u|^{n+t p}}\mathrm{d}u\right)^{\frac{1}{p}}\\
 &  &\qquad\qquad  
 \cdot\left(\int_{\{|u-x|>2|x-y|\}}\frac{\mathrm{d}u}{|x-u|^{n-t q+q}}\right)^{\frac{1}{q}}\\
 & \le & C|x-y|^{t}\left(\int_{\Omega\cap\{|u-x|>2|x-y|\}}\frac{|f(u)-f(x)|^{p}}{|x-u|^{n+t p}}\mathrm{d}u\right)^{\frac{1}{p}}.
 \end{eqnarray*}
Thus
 \[
\frac{|g_{1}(x,y)|^{p}}{|x-y|^{n+s p}}\le \frac{C}{|x-y|^{n+s p-t p}}\int_{\Omega\cap\{|u-x|>2|x-y|\}}\frac{|f(u)-f(x)|^{p}}{|x-u|^{n+t p}}\mathrm{d}u,\]
and then, by the Fubini's theorem, 
\begin{align*}
\int_{\Omega}\int_{\Omega}\frac{|g_{1}(x,y)|^{p}}{|x-y|^{n+s p}} &\mathrm{d}x\mathrm{d}y \le \\ 
\le & C\int_{\Omega}\int_{\Omega}\frac{1}{|x-y|^{n+s p-t p}}\int_{\Omega\cap\{|u-x|>2|x-y|\}}\frac{|f(u)-f(x)|^{p}}{|x-u|^{n+t p}}\mathrm{d}u\mathrm{d}x\mathrm{d}y\\
 = & C\int_{\Omega}\int_{\Omega}\int_{\Omega\cap\{|u-x|>2|x-y|\}}\frac{|f(u)-f(x)|^{p}}{|x-u|^{n+p-s p}}\frac{1}{|x-y|^{n+s p-t p}}\mathrm{d}y\mathrm{d}u\mathrm{d}x\\
 \le & C\int_{\Omega}\int_{\Omega}\frac{|f(u)-f(x)|^{p}}{|x-u|^{n+t p}}\frac{1}{|x-u|^{s p-t p}}\mathrm{d}u\mathrm{d}x\\
 =  & C\int_{\Omega}\int_{\Omega}\frac{|f(u)-f(x)|^{p}}{|x-u|^{n+s p}}\mathrm{d}u\mathrm{d}x<\infty.
 \end{align*}
 
 Since the terms $g_{2}$ and $g_{3}$ are symmetric, we only consider one of them. Take $t$ such that $0<t<s$. As before, using the properties of the kernel $K$ and the H\"older's inequality ($ \frac{1}{p} +\frac{1}{q} =1$),
 \begin{eqnarray*}
|g_{2}(x,y)| & \le & C \int_{\Omega\cap\{|x-u|<4|x-y|\}}\frac{|f(u)-f(x)|}{|x-u|^{n}}\mathrm{d}u\\
 & = & C \int_{\Omega\cap\{|x-u|<4|x-y|\}}\frac{|f(u)-f(x)|}{|x-u|^{\frac{n}{p}+t}}\frac{1}{|x-u|^{\frac{n}{q}-t}}\mathrm{d}u\\
 & \le & C \left(\int_{\Omega\cap\{|x-u|<4|x-y|\}}\frac{|f(u)-f(x)|^{p}}{|x-u|^{n+t p}}\mathrm{d}u\right)^{\frac{1}{p}} \\
 &  & \qquad\qquad  \cdot\left(\int_{\{|x-u|<4|x-y|\}}\frac{\mathrm{d}u}{|x-u|^{n-t q}}\right)^{\frac{1}{q}}\\
 & \le & C |x-y|^{t}\left(\int_{\Omega\cap\{|x-u|<4|x-y|\}}\frac{|f(u)-f(x)|^{p}}{|x-u|^{n+t p}}\mathrm{d}u\right)^{\frac{1}{p}}.
 \end{eqnarray*}
Then
\begin{eqnarray*}
\frac{|g_{2}(x,y)|^{p}}{|x-y|^{n+s p}} & \le & \frac{C}{|x-y|^{n+s p- t p}}\int_{\Omega\cap\{|x-u|<4|x-y|\}}\frac{|f(u)-f(x)|^{p}}{|x-u|^{n+ t p}}\mathrm{d}u\end{eqnarray*}
 and therefore
 \begin{align*}
\int_{\Omega}\int_{\Omega}\frac{|g_{2}(x,y)|^{p}}{|x-y|^{n+s p}}&\mathrm{d}x\mathrm{d}y\le \\ 
\le & C \int_{\Omega}\int_{\Omega}\int_{\Omega\cap\{|x-u|<4|x-y|\}}\frac{|f(u)-f(x)|^{p}}{|x-y|^{n+s p-t p}|x-u|^{n+t p}}\mathrm{d}y\mathrm{d}u\mathrm{d}x\\
 \le & C \int_{\Omega}\int_{\Omega}\frac{|f(u)-f(x)|^{p}}{|x-u|^{n+s p}}\mathrm{d}u
 \mathrm{d}x < \infty.
 \end{align*}
 
Finally, by Lemma  \ref{lema:acotacion}  we have 
$$
\left|  \int_{\Omega}K(y-u)\ \psi\left(\frac{u-x}{|y-x|}\right)\mathrm{d}u\right|  \leq C
$$ 
and consequently
  \begin{equation*}
\int_{\Omega}\int_{\Omega}\frac{|g_{4}(x,y)|^{p}}{|x-y|^{n+s p}}\mathrm{d}u\mathrm{d}x\le C \int_{\Omega}\int_{\Omega}\frac{|f(x)-f(y)|^{p}}{|x-y|^{n+s p}}\mathrm{d}x\mathrm{d}y< \infty .
\end{equation*}

Combining  all these inequalities we get \eqref{eq:carcoroBesov}.\\

Using the characterizations \eqref{sobolevs} and \eqref{sobolev1} one can see that the proofs for $f\in W^{s,p}(\Omega)$ or  $f\in W^{1,p}(\Omega)$ are very similar to that we just explained for $f\in B^s_{p,p}(\Omega)$.\\

\textbf{Remark:}  If $\Omega$ is a bounded domain of $\mathbb R^n$ with boundary of class  $\mathcal{C}^{1,\varepsilon}$, $\varepsilon >0$, and  $T$ is an even smooth homogeneous Calder\'{o}n-Zygmund operator
we have (see \cite[Main Lemma]{MOV})
$$
|   T(\chi_{\Omega})( x) -  T(\chi_{\Omega})(y)        |\le C|x-y|^{\varepsilon}, \qquad \forall x,y\in\Omega.
$$
Therefore $ T(\chi_{\Omega})$ belongs to $W^{s,p}(\Omega)$ and $B_{p,p}^{s}(\Omega)$ for any $s\in (0,\varepsilon)$.

\section{Proof of the Theorem \ref{T2}}
Consider the Beurling transform restricted on the domain $\Omega$ of class  $\mathcal{C}^{1,\varepsilon}$
$$
B_{\Omega}g(z)= -\frac{1}{\pi}\int_{\Omega} \frac{g(w)}{(z-w)^{2}}\mathrm{d}w.
$$
By \cite[Main Lemma]{MOV}, $| B (\chi_{\Omega})(z) - B (\chi_{\Omega})(w) |\le C |z -w|^{\varepsilon}$ for all $z,w\in \Omega$.
Now, applying Theorem \ref{T3} we  have that $B_{\Omega}$ is bounded on the spaces $B_{p,p}^{s}(\Omega)$ and $W^{s,p}(\Omega)$. Let's denote by  $X(\Omega)$ any of these two spaces.
We will show that the Beltrami operator $I-\mu B_{\Omega}$ is invertible on  $X(\Omega)$. Then, taking $h= (I-\mu B_{\Omega})^{-1}(\mu)$ we get the conclusions.

As in the proof of the Proposition \ref{Prop1}, we claim that $I-\mu B_{\Omega}$ is a Fredholm operator on $X(\Omega)$. Define $P_{m}=I+\mu B_{\Omega}+\cdots+(\mu B_{\Omega})^{m}$
so that
 \begin{equation*}
(I-\mu B_{\Omega})P_{m-1}=P_{m-1}(I-\mu B_{\Omega})=I-\mu^{m}(B_{\Omega})^{m}+R, \label{eq:estrella}
\end{equation*}
where $R=\mu^{m}(B_{\Omega})^{m}-(\mu B_{\Omega})^{m}$ can be easily seen to be a finite sum of operators that contain the commutator $[\mu,B_{\Omega}]$ as a factor. We will prove that  $[\mu,B_{\Omega}]\colon X(\Omega)\to X(\Omega)$ is a compact operator. On the other hand, for $z\in \Omega$
\begin{align*}
(I-\mu^{m}(B_{\Omega})^{m})f(z) & =(I-\mu^{m}(B^{m})_{\Omega})f (z)+\mu^{m}(z)((B^{m})_{\Omega} f(z)-(B_{\Omega})^{m}f(z))\\
&  =(I-\mu^{m}(B^{m})_{\Omega})f (z)+\mu^{m}(z)K_{m} f(z),
\end{align*}
where $B^{m}$ is the $m$-iterated Beurling transform and $K_{m}f := (B^{m})_{\Omega} f -(B_{\Omega})^{m}f $. As in the proof of Lemma \ref{Le1}, if $F\in X(\Omega)$ we get
 \begin{equation}
\|\mu^{m} F\|_{X(\Omega)}\leq C\: m \|\mu\|_{\infty}^{m-1}  \|\mu\|_{X(\Omega)}
\| F\|_{X(\Omega)}.
\label{d11}
\end{equation}
Remind that the kernel of  $B^m$ is $\dfrac{(-1)^m m \bar z^{m-1} }{\pi  z^{m+1} }$ and then, by Theorem \ref{T3}, if $f\in X(\Omega)$ we have
\begin{equation}
\| (B^{m})_{\Omega} f\|_{X(\Omega)} \leq C\: m^{2}\| f \|_{X(\Omega)}.
\label{d12}
\end{equation}
Consequently, combining \eqref{d11} and \eqref{d12},
 \[
\|\mu^{m}(B^{m})_{\Omega} f\|_{X(\Omega)}\leq C\: m^{3} \|\mu\|_{\infty}^{m-1}  \|\mu\|_{X(\Omega)} \| f\|_{X(\Omega)},
\]
which implies that $I-\mu^{m}(B^{m})_{\Omega}$ is invertible if $m$ is large.  Assume for a moment that the operators $K_{m}$ are compacts on $X(\Omega)$. Thus,   $I-\mu B_{\Omega}$ is a Fredholm operator and in addition has index zero. Since  $X(\Omega) \subset L^p(\Omega)$ we also have that $I-\mu B_{\Omega}$ is injective (see \cite{I1}) and therefore invertible on $X(\Omega)$. 

The compactness of the operators $[\mu,B_{\Omega}]$ and $K_{m}$ on $X(\Omega)$ follows arguments parallels. Since $X(\Omega)$ is an algebra and the Beurling transform $B_{\Omega}$ is bounded on $X(\Omega)$ we have
\[
\|[\mu,B_{\Omega}]f\|_{X}  = \| \mu B_{\Omega}f -B_{\Omega}(\mu f) \|_{X}   \leq C\|\mu\|_{X}\|f\|_{X}.
\]
Moreover, because the domain $\Omega$ is Lipschitz, there exists a sequence of functions 
 $\mu_{j}\in C^{\infty}(\overline{\Omega})$ such that $\mu_{j}$ converges to $\mu$ in
 $X(\Omega)$. So, we have reduced to prove the compactness when $\mu\in C^{\infty}(\overline{\Omega})$. In this case, the kernel of the commutator
 \begin{equation*}
[ \mu,B_{\Omega}]f (z) = -\frac{1}{\pi}\int_{\Omega}\frac{\mu(z)-\mu(w)}{(z-w)^{2}}f(w) \mathrm{d}w
:= \int_{\Omega} k(z,w) f(w) \mathrm{d}w
\end{equation*}
clearly satisfies 
\begin{eqnarray*}
|k(z,w)| & \le & \frac{C}{|z-w|}\quad\text{for all } z,w\in\Omega,\label{eq:Pnuc1}\\
|k(z',w)-k(z,w)| & \le & C \frac{|z-z'|}{|z-w|^{2}}\quad\text{if \ensuremath{|z-w|>2|z-z'|}.}\label{eq:Pnuc2}\end{eqnarray*}
Then, a simple computation gives (see \cite[p. 419]{MOV}), for $ z_1\,,\,z_2 \in \Omega$,
\begin{equation}\label{eq18}
|[\mu,B_{\Omega}]f(z_1)- [\mu,B_{\Omega}]f (z_2)| \le C\,|z_{1}-z_{2}| \,(1+\log
\frac{d}{|z_{1}-z_{2}|})\,\|f\|_\infty\,, 
\end{equation}
where $d$ denotes the diameter of $\Omega$. From \eqref{eq18} one immediately gets that 
$[ \mu,B_{\Omega}]f$ belongs to $B_{p,p}^{\beta}(\Omega)$ and to $W^{\beta, p}(\Omega)$ for any
$\beta <1$. The compact embedding $W^{\beta, p}(\Omega) \hookrightarrow W^{s, p}(\Omega)$, $s<\beta$, (and $B_{p,p}^{\beta}(\Omega) \hookrightarrow  B_{p,p}^{s}(\Omega)$) gives the compactness for the commutator (e.g. \cite[Proposition 7]{Tri2}).\\

We have  $K_{m}f = (B^{m})_{\Omega} f -(B_{\Omega})^{m}f $.
To prove that $K_{m}$ is compact on $X(\Omega)$ we will proceed by induction. For $m\geq2$,  
\begin{eqnarray*}
(B_{\Omega})^{m}f & = & B_{\Omega}((B_{\Omega})^{m-1}f)  =  B([(B_{\Omega})^{m-1}f] \chi_{\Omega})\\
& = & B([ B^{m-1}(f \chi_{\Omega}) -  K_{m-1}f] \chi_{\Omega})\\
& = & B( B^{m-1}(f \chi_{\Omega}) - (B^{m-1}(f \chi_{\Omega}))\chi_{\Omega^{c}} -  (K_{m-1}f )\chi_{\Omega})\\
 & = & B^{m}(f \chi_{\Omega}) - B(\chi_{\Omega^{c}} B^{m-1}(f \chi_{\Omega})  ) - B_{\Omega}(K_{m-1}f)
 \end{eqnarray*}
 It is then enough to prove that, for  $m\geq1$, the operator 
 \[
 Q_{m} f := B( (B^{m} (f\chi_{\Omega})) \chi_{\Omega^{c}}))
\]
is compact in $X(\Omega)$. For $z\in \Omega$, we write
\begin{eqnarray*}
Q_{m}f (z) & = &  B(  (B^{m} (f \chi_{\Omega})) \chi_{\Omega^{c}} ) (z) \\ [2mm]
& = & -\frac{1}{\pi}\int_{\Omega^{c}}\frac{B^{m}(f \chi_{\Omega})(w)}{(z-w)^{2}}\mathrm{d}w\\
& = & - \frac{1}{\pi} \int_{\Omega^c} \frac{1}{(z-w)^2} \frac{(-1)^m m}{\pi}\,\int_\Omega
\frac{(\overline{w-\xi})^{m-1}}{(w-\xi)^{m+1}}\,f(\xi)\, \mathrm{d} \xi
\,\mathrm{d} w \\
 & = & \int_{\Omega}K_{m}(z,\xi)f(\xi)\mathrm{d}\xi,\end{eqnarray*}
where 
\[
K_{m}(z,\xi):= \frac{(-1)^{m+1}}{\pi^{2}} \int_{\Omega^{c}}\frac{1}{(z-w)^{2}}\frac{m\overline{(w-\xi})^{m-1}}{(w-\xi)^{m+1}}\mathrm{d}w .
\]
In \cite[p. 418--419]{MOV}, it is proved that if $\Omega$ is a bounded domain of class $\mathcal{C}^{1,\varepsilon}$ and $f\in L^{\infty}(\Omega)$ then 
\begin{eqnarray*}
| Q_{m}f (z)| & \le & C d^{\varepsilon} \| f\|_{\infty}\,  , \quad z\in \Omega \, , \\
| Q_{m}f(z_1)- Q_{m}f (z_2)|&  \le & C\,|z_{1}-z_{2}|^{\varepsilon} \,(1+\log
\frac{d}{|z_{1}-z_{2}|})\,\|f\|_{\infty}\, , \quad z_{1}, z_{2}\in \Omega \, ,
\end{eqnarray*}
where $d$ denotes the diameter of $\Omega$ and $C$ depends on $m$ and $\Omega$.

Consequently, if $f\in X(\Omega)$ then $Q_{m}f$ belongs to $B_{p,p}^{\beta}(\Omega)$ and to $W^{\beta, p}(\Omega)$ for any $\beta <\varepsilon$. Choose $\beta$ such that $s<\beta< \varepsilon$. Again, the compact embeddings 
$W^{\beta, p}(\Omega) \hookrightarrow W^{s, p}(\Omega)$ and $B_{p,p}^{\beta}(\Omega) \hookrightarrow  B_{p,p}^{s}(\Omega)$  give the compactness of $Q_{m}$.

\begin{gracies}
The authors were partially supported by grants 2009SGR420
(Generalitat de Catalunya) and  MTM2010-15657 (Ministerio de Ciencia
e Innovaci\'{o}n, Spain).

\end{gracies}

\begin{tabular}{l}
Victor Cruz\\
Instituto de F\'{\i}sica y Matem\'aticas\\
Universidad Tecnol\'ogica de la Mixteca\\
69000 Huajuapan, Oaxaca, M\'exico\\
{\it E-mail:} {\tt victorcruz@mixteco.utm.mx}\\ \\
Joan Mateu\\
Departament de Matem\`{a}tiques\\
Universitat Aut\`{o}noma de Barcelona\\
08193 Bellaterra, Barcelona, Catalonia\\
{\it E-mail:} {\tt mateu@mat.uab.cat}\\ \\
Joan Orobitg\\
Departament de Matem\`{a}tiques\\
Universitat Aut\`{o}noma de Barcelona\\
08193 Bellaterra, Barcelona, Catalonia\\
{\it E-mail:} {\tt orobitg@mat.uab.cat}\\ \\
\end{tabular}


\begin{thebibliography}{MOV}

\bibitem[Ah]{Ah} L.\ Ahlfors,  {\em  Lectures on quasiconformal mappings,} Second
edition,
 University Lecture Series, 38, American Mathematical
Society, Providence, RI,  2006.

 
\bibitem[AIM]{AIM} K.\ Astala, T.\ Iwaniec and G.\ Martin, {\em Elliptic Equations and Quasiconformal Mappings in the Plane,} Princeton Mathematical Series, vol. 47, 
Princeton University Press, 2009.

\bibitem[AIS]{AIS} K.\ Astala, T.\ Iwaniec and E.\ Saksman,  {\em Beltrami operators in the plane,}
 Duke Math. J. {\bf 107} (2001), 27--56.

\bibitem[Ba]{Ba} R.\ J.\ Bagby, {\em A characterization of {R}iesz potentials, and an inversion formula,} 
 Indiana Univ. Math. J., 29(4):581--595, 1980.
 
\bibitem[Br1]{Br1} H.\ Brezis, {\em  Functional Analysis, Sobolev Spaces and Partial Differential Equations,}  Springer, 2011. 


\bibitem[Br2]{Br2} H.\  Brezis. {\em How to recognize constant functions. A connection with
Sobolev  spaces,} 
Russian Math. Surveys {\bf 57}(4) (2002), 693--708.


\bibitem[CFMOZ]{CFMOZ} A.\ Clop, D.\ Faraco, J.\ Mateu, J.\ Orobitg and X.\ Zhong,  {\em  Beltrami equations with coefficient in the Sobolev space $W^{1,p}$,} 
Publ. Mat. {\bf 53} (2009), 197--230.

\bibitem[CoP]{CoP}F.\ Cobos and L.E.\ Persson, {\em Real interpolation of compact operators between quasi-Banach spaces,} Math. Scand. {\bf 82}(1) (1998) 138--160.

\bibitem[CT]{CT} V. \ Cruz and X.\ Tolsa, {\em The smoothness of the Beurling transform of characteristic functions of Lipschitz domains,} J.  Funct. Anal.  {\bf 262} (10)  (2012)  4423--4457.

\bibitem[FTW]{FTW} M.\ Frazier, R.\ Torres, G.\ Weiss, {\em The boundedness of Calderón-Zygmund operators on the spaces $F^{\alpha}_{p,q}$}, Rev. Mat. Iberoamericana {\bf 4} (1988), no. 1, 41--72.

\bibitem[Gr]{Gr} L.\ Grafakos, {\em Modern Fourier analysis. Second edition.} Graduate Texts in 
Mathematics, 250. Springer, New York, 2009.
 
\bibitem[Iw]{I1} T.\ Iwaniec, {\em $L\sp p$-theory of quasiregular mappings,}
 Lecture Notes in Math., 1508, Springer, Berlin, 1992, 39--64.

\bibitem[JHL]{JHL} C.\ Jiecheng, J.\ Houyu and J.\ Liya, {\em Boundedness of rough oscil latory singular  integral on Triebel-Lizorkin spaces,} J. Math. Anal. Appl. 306 (2005) 385--397.

\bibitem[MOV]{MOV} J.\ Mateu, J.\ Orobitg and J.\ Verdera,
{\em Extra cancellation of even  Calder\'{o}n-Zygmund operators and
quasiconformal mappings}, J. Math. Pures Appl. \textbf{91}
(4)(2009), 402–-431.

\bibitem[Me]{Me}  Y.\ Meyer,    \emph{Continuit\'e sur les espaces de H\"older et de Sobolev des op\'erateurs d\'efinis par des int\'egrales singuli\`eres,} Recent progress in Fourier analysis (El Escorial, 1983), 145--172, North-Holland Math. Stud., 111, North-Holland, Amsterdam, 1985. 


\bibitem[RS]{RS} T.\ Runst and W.\ Sickel, {\em Sobolev spaces of fractional order, Nemytskij op- 
erators, and nonlinear partial differential equations}, de Gruyter Series in 
Nonlinear Analysis and Applications, 3. Walter de Gruyter \& Co., Berlin, 
1996

\bibitem [Sch]{Sch}  M.\ Schechter,  {\em Principles of functional analysis,} Second edition.
Graduate Studies in Mathematics, 36. American Mathematical Society, Providence, RI,  2002.


\bibitem [St]{St} E.\ M.\ Stein,  {\em Singular integrals and differentiability properties of
functions,} Princeton University Press, Princeton, (1970).

\bibitem[St2]{St2} E.\ M.\ Stein, {\em Editor's note: the differentiability of functions in {${\bf R}^{n}$}},  Ann. of Math. (2), 113(2):383--385, 1981.

\bibitem[Str]{Str}
R.\ S.\ Strichartz, {\em Multipliers on fractional {S}obolev spaces}, J. Math. Mech., 16:1031--1060, 1967.

\bibitem[Tri1]{Tri1} H.\ Triebel, {\em Theory of function spaces} Monographs in Mathematics, 78. Birkhäuser Verlag, Basel,  1983.

\bibitem[Tri2]{Tri2} H.\ Triebel,  {\em Sampling numbers and embedding constants.}
 Tr. Mat. Inst. Steklova  248  (2005),  Issled. po Teor. Funkts. i Differ. Uravn., 275--284; translation in  Proc. Steklov Inst. Math.  2005,  no. 1 (248), 268--277

\bibitem[Tar]{Tar} L.\  Tartar,  {\em An introduction to Sobolev spaces and interpolation
spaces},   volume~3 of {\em Lecture Notes of the Unione Matematica Italiana}, Springer, Berlin, 2007.

\bibitem[To]{To} X.\ Tolsa, {\em Regularity of $C^{1}$ and Lipschitz domains in terms of the Beurling transform}, preprint (2012).

\bibitem [U]{Uchi} A.\ Uchiyama, {\em On the compactness of operators of Hankel type,} Tohoku Math. J. (2), {\bf 30}, (1978), no. 1, 163--171. 


\end{thebibliography}
\end{document}